\newcommand{\referenza}{}
\newtheorem{prop}{Proposition}[section]
\newtheorem*{prop*}{Proposition \referenza}
\newtheorem{thm}[prop]{Theorem}
\newtheorem*{thm*}{Theorem \referenza}
\newtheorem{cor}[prop]{Corollary}
\newtheorem*{cor*}{Corollary \referenza}
\theoremstyle{definition}
\newtheorem{ex}[prop]{Example}
\newcommand{\Z}{\mathbb{Z}}
\newcommand{\Q}{\mathbb{Q}}
\newcommand{\R}{\mathbb{R}}
\newcommand{\C}{\mathbb{C}}
\newcommand{\g}{\mathfrak{g}}
\newcommand{\sspace}{\text{\--}}
\newcommand{\ssspace}{\text{\textdblhyphen}}
\newcommand{\scalar}[2]{\left\langle #1 \,\middle|\, #2 \right\rangle}
\DeclareMathOperator{\im}{i}
\DeclareMathOperator{\imm}{im}
\DeclareMathOperator{\de}{d}
\DeclareMathOperator{\End}{End}
\newcommand{\del}{\partial}
\newcommand{\delbar}{\overline{\del}}
\title[Generalized complex cohomologies]{Cohomologies of generalized complex manifolds and nilmanifolds}
\author{Daniele Angella}
\address[Daniele Angella]{Centro di Ricerca Matematica ``Ennio de Giorgi''\\
Collegio Puteano, Scuola Normale Superiore\\
Piazza dei Cavalieri 3\\
56126 Pisa, Italy}
\email{daniele.angella@sns.it}
\email{daniele.angella@gmail.com}
\author{Simone Calamai}
\address[Simone Calamai]{Dipartimento di Matematica e Informatica ``Ulisse Dini''\\
Università di Firenze\\
via Morgagni 67/A, 50134\\
Firenze, Italy}
\email{scalamai@math.unifi.it}
\author{Hisashi Kasuya}
\address[Hisashi Kasuya]{Department of Mathematics\\
Tokyo Institute of Technology\\
1-12- 1-H-7, O-okayama, Meguro\\
Tokyo 152-8551, Japan}
\email{khsc@ms.u-tokyo.ac.jp}
\email{kasuya@math.titech.ac.jp}
\keywords{cohomology, generalized complex, nilmanifold, deformation}
\thanks{During the preparation of the work, the first author has been granted by a research fellowship by Istituto Nazionale di Alta Matematica INdAM and by a Junior Visiting Position at Centro di Ricerca ``Ennio de Giorgi''; he is also supported by the Project PRIN ``Varietà reali e complesse: geometria, topologia e analisi armonica'', by the Project FIRB ``Geometria Differenziale e Teoria Geometrica delle Funzioni'', by SNS GR14 grant ``Geometry of non-Kähler manifolds'', and by GNSAGA of INdAM. The second author is supported by GNSAGA of INdAM, by the Project PRIN ``Varietà reali e complesse: geometria, topologia e analisi armonica'', and by SNS GR14 grant ``Geometry of non-Kähler manifolds''. The third author is  supported by JSPS Research Fellowships for Young Scientists.}
\subjclass[2010]{57T15, 53D18, 32G07}
\begin{document}

\begin{abstract}
 We study generalized complex cohomologies of generalized complex structures constructed from certain symplectic fibre bundles over complex manifolds. We apply our results in the case of left-invariant generalized complex structures on nilmanifolds and to their space of small deformations.
\end{abstract}

\maketitle

\section*{Introduction}

Generalized complex geometry, in the sense of N. Hitchin, M. Gualtieri, and G.~R. Cavalcanti, \cite{hitchin, gualtieri-annals, cavalcanti-phd}, unifies symplectic and complex geometries in a unitary framework. In such a way, it clarifies the parallelism between results for (non-K\"ahler) complex manifolds and for (non-K\"ahler) symplectic manifolds.

\medskip

We recall that a generalized complex structure on a differentiable manifold $M$ is an endomorphism ${\mathcal J}\in {\rm End}(TM\oplus T^*M)$ such that ${\mathcal J}^{2}=-1$ and the $i$-eigenbundle $L\subset (TM\oplus T^*M)\otimes \C$ is involutive with respect to the Courant bracket \eqref{eq:courant}. If $\omega\in\wedge^2M$ (see as an isomorphism $TM\to T^*M$) is a symplectic structure, respectively $J\in{\rm End}(TM)$ is a complex structure on $M$, then
$$ \mathcal{J}_\omega \;:=\;
\left(
\begin{array}{c|c}
 0 & -\omega^{-1} \\
\hline
 \omega & 0
\end{array}
\right) \;,
\qquad
\text{ respectively }
\qquad
\mathcal{J}_J
\;:=\;
\left(
\begin{array}{c|c}
 -J & 0 \\
\hline
 0 & J^*
\end{array}
\right)
$$
are generalized complex structures on $M$.
In view of the generalized Darboux theorem \cite[Theorem 3.6]{gualtieri-annals} proved by M. Gualtieri, these examples constitute the basic models of generalized complex structures near regular points.

A generalized complex structure $\mathcal{J}$ on $M$ of dimension $2n$ yields a decomposition of complex differential forms $\wedge ^{\bullet}  T^*M\otimes \C = \bigoplus_{j=-n}^{n}U^{j}$, whence the bi-differential $\Z$-graded complex
$$ \left( {\mathcal U}^{\bullet},\, \del,\, \delbar \right) \;. $$
In this note, we are interested in the generalized Dolbeault cohomologies
$$
 GH^{\bullet}_{\del}(M) \;:=\; \frac{\del}{\imm\del}
 \qquad \text{ and } \qquad
 GH^{\bullet}_{\delbar}(M) \;:=\; \frac{\ker\delbar}{\imm\delbar} \;,
$$
and in the generalized Bott-Chern and Aeppli cohomologies
$$
 GH^{\bullet}_{BC}(M) \;:=\; \frac{\ker\del\cap\ker\delbar}{\imm\del\delbar}
 \qquad \text{ and } \qquad
 GH^{\bullet}_{A}(M) \;:=\; \frac{\ker\del\delbar}{\imm\del+\imm\delbar} \;.
$$
(Note that, in the complex case, the generalized $\del$ and $\delbar$ operators coincide with the complex operators, and so, up to a change of graduation, the above cohomologies are exactly the Dolbeault and the Bott-Chern cohomologies. In the symplectic case, the generalized Dolbeault cohomology is isomorphic to the de Rham cohomology, and the generalized Bott-Chern cohomology has been studied by L.-S. Tseng and S.-T. Yau, see \cite{tseng-yau-1, tseng-yau-2, tseng-yau-3, tsai-tseng-yau}.)

More precisely, look at the $i$-eigenbundle $L\subset (TM\oplus T^*M)\otimes \C$ of $\mathcal{J}\in\End((TM\oplus T^*M) \otimes \C)$ with the Lie algebroid structure given by the Courant bracket and the projection $\pi\colon L\to TM\otimes \C$. Take a generalized holomorphic bundle, that is, a complex vector bundle $E$ with  a Lie algebroid connection 
\[\delbar\colon  \mathcal{C}^{\infty}(\wedge^{k} L^{\ast}\otimes E)\to  \mathcal{C}^{\infty}(\wedge^{k+1} L^{\ast}\otimes E)
\] 
satisfying $\delbar\circ \delbar=0$. Consider
\[
GH_{\delbar}^{n-\bullet}(M,E) \;:=\; H^{\bullet}(L,E) \;:=\; \frac{\ker\left(\delbar\colon \mathcal{C}^{\infty}(\wedge^{\bullet} L^{\ast}\otimes E) \to \mathcal{C}^{\infty}(\wedge^{\bullet+1} L^{\ast}\otimes E)\right)}{\imm\left(\delbar\colon \mathcal{C}^{\infty}(\wedge^{\bullet-1} L^{\ast}\otimes E) \to \mathcal{C}^{\infty}(\wedge^{\bullet} L^{\ast}\otimes E)\right)} \;.
\]

\medskip

One way to construct generalized complex structures on manifolds is the following.
Let $p\colon X\to B$ be a symplectic fibre bundle with a generic fibre $(F,\sigma)$. Assume that the base $B$ is a compact complex manifold and that there is a closed form $\omega$ on the total space $X$ which restricts to the symplectic form $\sigma$ on the generic $F$.
Then we can construct a non-degenerate pure form, and then a generalized complex structure on $E$.

We construct the following Leray spectral sequence for computing the generalized cohomology of such an $E$.

\renewcommand{\referenza}{\ref{spect-dol}}
\begin{cor*}
Let $p\colon X\to B$ a symplectic fibre bundle with a generic fibre $(F,\sigma)$ of dimension $2\ell$
such that:
\begin{itemize}
\item $B$ is a compact complex manifold of complex dimension $k$;
\item we have a closed form $\omega$ on the total space $X$ which restricts to the symplectic form $\sigma$ on the generic $F$.
\end{itemize}
Consider the generalized complex structure $\mathcal J$ on $X$ defined by $\omega$ and the complex structure of $B$
and the $i$-eigenbundle $L$ of $\mathcal J$.
Let $W$ be a complex vector bundle over $X$
such that $W=p^{\ast}W^{\prime}$ for a holomorphic vector bundle $W^{\prime}$ over the complex manifold $B$.
We regard $W$ as a generalized holomorphic bundle.
Consider the flat vector bundle ${\bf H}(F)=\bigcup_{x\in B}H^{\bullet}(F_{b})$ over $B$.

Then there exists a spectral sequence $\left\{ E_{r}^{\bullet,\bullet} \right\}_{r}$ which converges to $GH_{\delbar}^{k+\ell-\bullet}(X)$
such that 
\[E^{p,q}_{2}\cong GH_{\delbar}^{k-p}(B,{\bf H}^{\ell-q}(F))\;.
\]
\end{cor*}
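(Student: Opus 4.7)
The plan is to build a Leray-type spectral sequence on the Lie algebroid complex $(\mathcal{C}^{\infty}(\wedge^{\bullet} L^\ast \otimes W), \delbar)$ by filtering by the degree in the horizontal (base) direction and then identifying the $E_2$-page.

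The first step is to extract, from the construction of $\mathcal{J}$, a short exact sequence of Lie algebroids
\begin{equation*}
0 \longrightarrow L_F \longrightarrow L \stackrel{p_\ast}{\longrightarrow} p^\ast L_B \longrightarrow 0,
\end{equation*}
where $L_B=T^{0,1}B\oplus T^{\ast 1,0}B$ is the $\im$-eigenbundle of the complex structure on $B$, and $L_F$ is the vertical sub-algebroid, canonically isomorphic (through the anchor of $L$) to $(\ker\de p)\otimes\C$ and identified with the symplectic Lie algebroid of the generic fibre $(F,\sigma)$. The closedness of $\omega$ on $X$ (rather than merely the non-degeneracy of $\sigma$ on fibres) is what guarantees that $L$ is Courant-involutive in a way compatible with this split. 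Using the sequence, I filter $\mathcal{C}^\infty(\wedge^\bullet L^\ast\otimes W)$ by horizontal degree; this filtration is bounded and $\delbar$-stable.

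On the $E_0$-page one finds the graded pieces $\mathcal{C}^\infty\bigl(p^\ast\wedge^p L_B^\ast \otimes \wedge^q L_F^\ast \otimes W\bigr)$, with induced differential equal to the fibre-wise Lie algebroid differential on $L_F$. Since $L_F$ is the symplectic Lie algebroid, this differential coincides with the vertical de Rham differential through $\wedge^\bullet L_F^\ast\cong\wedge^\bullet T^\ast F\otimes\C$, as recalled in the introduction; the hypothesis $W=p^\ast W'$ ensures that $W$ is fibre-wise trivial, so no twist appears here. Taking fibre cohomology produces
\begin{equation*}
E_1^{p,q} \;\cong\; \mathcal{C}^\infty\bigl(B,\,\wedge^p L_B^\ast\otimes\mathbf{H}^{\ell-q}(F)\otimes W'\bigr),
\end{equation*}
using $H^q(L_F)\cong GH_{\delbar}^{\ell-q}(F)\cong H^{\ell-q}_{dR}(F)$ and the reindexing of the statement. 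The $E_1$-differential is the Lie algebroid $\delbar$ on $L_B$, coupled with the holomorphic structure on $W'$ and the Gauss--Manin flat connection on $\mathbf{H}(F)$, giving
\begin{equation*}
E_2^{p,q} \;\cong\; H^p\bigl(L_B,\,\mathbf{H}^{\ell-q}(F)\otimes W'\bigr) \;=\; GH_{\delbar}^{k-p}\bigl(B,\,\mathbf{H}^{\ell-q}(F)\bigr),
\end{equation*}
with $W'$ absorbed into the coefficients as in the statement, and boundedness of the filtration yields convergence to $H^{p+q}(L,W)=GH_{\delbar}^{k+\ell-(p+q)}(X)$.

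The hard part will be the structural step: showing that the $\im$-eigenbundle $L$ of the generalized complex structure built from the closed form $\omega$ and $J_B$ really fits into the asserted short exact sequence of Lie algebroids, with vertical piece the symplectic Lie algebroid of $(F,\sigma)$. This requires unwinding the local pure spinor $\exp(\im\omega)\wedge p^\ast\Omega_B$, checking Courant-bracket compatibility with the horizontal/vertical split (where the global closedness of $\omega$ enters decisively), and matching the Gauss--Manin connection on $\mathbf{H}(F)$ with the one arising from the base-directed part of $\delbar$. Once these structural points are settled, the rest is a routine Leray-filtration computation.
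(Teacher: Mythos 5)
Your filtration argument is, in substance, the paper's own proof of Theorem \ref{spect}: your vertical sub-algebroid $L_F$ is the sub-bundle $S$ with $S_{\vert U}=\{X-i\omega(X): X\in TF\otimes\C\}$, your filtration by horizontal degree is the filtration $F^{p}\mathcal{C}^{\infty}(\wedge^{p+q}L^{\ast}\otimes W)$ of forms vanishing whenever $q+1$ arguments are vertical, and the identifications of $E_{0}$, $E_{1}$ and $E_{2}$ (fibre-wise de Rham complex via $X\mapsto X-i\omega(X)$, then Dolbeault of the base with values in $W^{\prime}\otimes\mathbf{H}(F)$) are exactly those carried out in Section \ref{fisp}. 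As an outline of the general statement for $H^{\bullet}(L,W)$ it is correct.

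The gap lies in the final step, which is precisely the content of the Corollary as opposed to the Theorem. You assert $H^{p+q}(L,W)=GH_{\delbar}^{k+\ell-(p+q)}(X)$ for an arbitrary pullback bundle $W=p^{\ast}W^{\prime}$, and you ``absorb $W^{\prime}$ into the coefficients'' to pass from $H^{p}(L_{B},\mathbf{H}(F)\otimes W^{\prime})$ to $GH_{\delbar}^{k-p}(B,\mathbf{H}(F))$. Neither identity holds for general $W$: by the paper's conventions $GH^{n-\bullet}_{\delbar}(X)=H^{\bullet}(L,K)$ and $GH^{k-p}_{\delbar}(B,V)=H^{p}(L_{B},K_{B}\otimes V)$, so both identifications force the specific choice $W=K$ and $W^{\prime}=K_{B}$. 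The one observation that makes Theorem \ref{spect} applicable to this choice---and hence the entire content of the Corollary---is that the canonical line bundle $K$ of $(X,\mathcal J)$ is isomorphic as a bundle to $p^{\ast}K_{B}$ (locally $K$ is generated by $e^{i\omega}\,dz_{1}\wedge\dots\wedge dz_{k}$, and the factor $e^{i\omega}$ does not change the bundle type); your proposal never mentions $K$ at all. A smaller point: the chain $H^{q}(L_{F})\cong GH_{\delbar}^{\ell-q}(F)\cong H^{\ell-q}_{dR}(F)$ is not correct as written, since for the symplectic structure on the fibre one has $GH^{\ell-q}_{\delbar}(F)\cong H^{q}_{dR}(F)$; the de Rham degree of the coefficient bundle at $E_{1}$ is $q$, and matching this against the degree $\ell-q$ in the statement is an indexing issue you should settle explicitly rather than derive from that isomorphism.
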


\medskip

As an application of the above results, we investigate generalized cohomologies of nilmanifolds $M=\left.\Gamma\middle\backslash G\right.$, that is, compact quotients of connected simply-connected nilpotent Lie groups $G$. We consider left-invariant generalized complex structures on $M$, equivalently, linear generalized complex structures on the Lie algebra $\g$ of $G$.
Note that left-invariant generalized complex structures on nilmanifolds are generalized Calabi-Yau, that is, the canonical line bundle $K$ is trivial; whence $GH^{n-\bullet}_{\delbar}(M)=H^{\bullet}(L)$.

In this context, we have a generalized complex decomposition also at the level of the Lie algebra, namely, $\wedge^\bullet\g^*=\bigoplus_j\mathfrak{U}^j$, and a (finite dimensional) bi-differential $\Z$-graded sub-complex
$$ \left( \mathfrak{U}^\bullet,\, \del,\, \delbar \right) \to \left( \mathcal{U}^\bullet,\, \del,\, \delbar \right) \;. $$
It induces the map $GH^\bullet_{\delbar}(\g) \to GH^\bullet_{\delbar}(M)$ in cohomology, which is in fact always injective.

\renewcommand{\referenza}{\ref{cor:lieiso}}
\begin{cor*}
Let $G$ be a connected simply-connected nilpotent Lie group and $\g$ the Lie algebra of $G$.
We suppose that $G$ admits a lattice $\Gamma$ and consider the $\Q$-structure $\g_{\Q}\subset \g$ induced by $\Gamma$.
We assume that there exists an ideal $\mathfrak h\subset \g$ so that:
\begin{enumerate}
\item $\g_{\Q}\cap \mathfrak h$ is a $\Q$-structure of $\mathfrak h$;
\item $\g/{\mathfrak h}$ admits a complex structure $J$;
\item we have a closed $2$-form $\omega\in \wedge^{2} \g^{\ast}$
yielding $\omega\in \wedge^{2}{\mathfrak h}^{\ast}$ non-degenerate form on $\mathfrak h$;
\item $\iota\colon \wedge^{\bullet,\bullet}(\g/\mathfrak h)^{\ast}\otimes\C\to \wedge^{\bullet,\bullet}\Gamma H\backslash G$ induces an isomorphism on the Dolbeault cohomology.
\end{enumerate}
Then the inclusion $\iota\colon({\mathfrak U}^{\bullet},\del,\delbar)\to ({\mathcal U}^{\bullet},\del,\delbar) $ induces isomorphisms
$GH_{\delbar}(\g)\cong GH_{\delbar}(\Gamma\backslash G)$, and
$GH_{\del}(\g)\cong GH_{\del}(\Gamma\backslash G)$, and 
$GH_{BC}(\g)\cong GH_{BC}(\Gamma\backslash G)$.
\end{cor*}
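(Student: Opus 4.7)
My plan is to reduce the statement to a comparison of two Leray-type spectral sequences---one for the full complex $(\mathcal U^\bullet,\del,\delbar)$, the other for its left-invariant sub-complex $(\mathfrak U^\bullet,\del,\delbar)$---and to check that the comparison map is an isomorphism on the $E_2$ page.

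Hypotheses~(i)--(iii) place us in the setting of Corollary~\ref{spect-dol}. Since $\mathfrak h$ is an ideal, the connected Lie subgroup $H\subset G$ with Lie algebra $\mathfrak h$ is normal and, by~(i), closed with $\Gamma\cap H$ a uniform lattice; hence
\[
(\Gamma\cap H)\backslash H \;\to\; \Gamma\backslash G \;\to\; \Gamma H\backslash G
\]
is a principal nilmanifold bundle whose base carries a left-invariant complex structure (by~(ii)) and whose total space is equipped with a closed $2$-form $\omega$ restricting to a left-invariant symplectic form on each fibre (by~(iii)). Corollary~\ref{spect-dol} therefore produces a spectral sequence $\{E_r^{\bullet,\bullet}\}_r\Rightarrow GH_{\delbar}^{k+\ell-\bullet}(\Gamma\backslash G)$ with
\[
E_2^{p,q}\;\cong\;GH_{\delbar}^{k-p}\bigl(\Gamma H\backslash G,\,\mathbf H^{\ell-q}(F)\bigr)\;.
\]

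The same filtration by the ideal $\mathfrak h$ restricts to $(\mathfrak U^\bullet,\del,\delbar)$ and yields a parallel spectral sequence ${}^{\mathrm{inv}}\!E_r^{\bullet,\bullet}\Rightarrow GH_{\delbar}(\g)$ with
\[
{}^{\mathrm{inv}}\!E_2^{p,q}\;\cong\;GH_{\delbar}^{k-p}\bigl(\g/\mathfrak h,\,H^{\ell-q}(\mathfrak h)\bigr)\;,
\]
after recalling that the generalized $\delbar$-cohomology of a symplectic structure coincides with its de Rham cohomology. The inclusion $\iota$ preserves both filtrations and induces a morphism ${}^{\mathrm{inv}}\!E_r\to E_r$. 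On $E_2$ this morphism is an isomorphism: on the fibre side, Nomizu's theorem identifies $H^\bullet(\mathfrak h)$ with $H^\bullet_{dR}((\Gamma\cap H)\backslash H)$, so the stalks of the local system $\mathbf H^\bullet(F)$ match the coefficients appearing in ${}^{\mathrm{inv}}\!E_2$; on the base side, hypothesis~(iv), together with the identification of the generalized $\delbar$-cohomology of a complex manifold with its ordinary Dolbeault cohomology (up to regrading), delivers the base isomorphism for any constant coefficient system. A standard comparison of convergent spectral sequences then yields $GH_{\delbar}(\g)\cong GH_{\delbar}(\Gamma\backslash G)$.

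For $GH_{\del}$ the argument is entirely symmetric, using the conjugate spectral sequence. I expect the main obstacle to be the $GH_{BC}$ case: the relevant Bott-Chern analogue of Corollary~\ref{spect-dol} has $E_2$ page built from Bott-Chern cohomology of the base with coefficients in the de Rham cohomology of the fibre, so one must promote~(iv) from a Dolbeault to a Bott-Chern isomorphism on the base nilmanifold. This step is not automatic and requires an additional ingredient specific to left-invariant double complexes on nilmanifolds---for instance, leveraging that the $\del$- and $\delbar$-quasi-isomorphisms already established at the Lie algebra level, combined with boundedness and finite-dimensionality of the bi-complex $\mathfrak U^\bullet$, force the corresponding Bott-Chern map to be an isomorphism as well. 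Once secured, the comparison of spectral sequences concludes the proof for $GH_{BC}$ exactly as for $GH_{\delbar}$.
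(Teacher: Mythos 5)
Your overall architecture is the same as the paper's: compare the Hochschild--Serre spectral sequence of the ideal $\mathfrak S\subset\mathfrak L$ (converging to $H^{\bullet}(\mathfrak L)\cong GH_{\delbar}(\g)$ by Theorem \ref{nilcal}) with the Leray-type spectral sequence of Theorem \ref{spect}/Corollary \ref{spect-dol} for the fibration $(\Gamma\cap H)\backslash H\to\Gamma\backslash G\to\Gamma H\backslash G$, and show the induced map is an isomorphism on $E_{2}$. However, there is a genuine gap at the key step. You claim that hypothesis (iv) ``delivers the base isomorphism for any \emph{constant} coefficient system,'' but the coefficients appearing on the $E_{2}$ page are \emph{not} constant: ${\bf H}^{q}(F)$ is the flat bundle over $\Gamma H\backslash G$ associated (via Nomizu) to the $\g/\mathfrak h$-module $H^{q}(\mathfrak h\otimes\C)$, whose monodromy is induced by the adjoint action and is in general nontrivial (only nilpotent). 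Hypothesis (iv) is an isomorphism statement for \emph{trivial} coefficients only, so it does not by itself give
$H^{p}({\mathfrak L}_{J},H^{q}(\mathfrak h\otimes \C))\cong H^{p}(L_{ \Gamma H\backslash G}, {\bf H}^{q}(\Gamma\cap H\backslash H))$.
The paper bridges exactly this gap with Proposition \ref{enge}: since $H^{q}(\mathfrak h\otimes\C)$ is a \emph{nilpotent} $\g/\mathfrak h$-module, Engel's theorem provides a filtration with trivial one-dimensional quotients, and an induction using the associated short exact sequences, the long exact cohomology sequences, and the five lemma upgrades the trivial-coefficient isomorphism of Proposition \ref{tri} to arbitrary nilpotent coefficients. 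Without this (or an equivalent unipotent-monodromy argument) your $E_{2}$ comparison is not established.

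On the $GH_{BC}$ case you correctly sense that a Bott--Chern spectral sequence comparison is problematic, but the paper does not attempt one: it first gets $GH_{\del}(\g)\cong GH_{\del}(\Gamma\backslash G)$ by complex conjugation from the $\delbar$-isomorphism, and then invokes the general result \cite[Corollary 1.2]{angella-kasuya-SG}, which states that a morphism of bi-differential complexes inducing isomorphisms on both the $\del$- and $\delbar$-cohomologies induces an isomorphism on Bott--Chern cohomology. Your proposed fallback (``the $\del$- and $\delbar$-quasi-isomorphisms force the Bott--Chern isomorphism'') is precisely this cited fact, but as written it remains an unproved assertion rather than an argument; you would need either to prove it or to cite it.
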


As regards the fourth assumption, we note that it holds, e.g., when $J$ is either bi-invariant, or holomorphically-parallelizable, or Abelian, or rational, or nilpotent, see \cite{console-survey} and the references therein.

As an explicit example, we study a generalized complex structure on the Kodaira-Thurston manifold in Section \ref{sec:kt}.

The above invariance result for generalized cohomologies is stable under small deformations.

\renewcommand{\referenza}{\ref{thm:coh-def}}
\begin{thm*}
Let $\Gamma\backslash G$ be a nilmanifold with a left-invariant generalized complex structure $\mathcal J$; denote by $\g$ be the Lie algebra of $G$.
If the isomorphism $GH_{\delbar}(\g)\cong GH_{\delbar}(\Gamma\backslash G)$ holds on the original generalized complex structure $\mathcal J$,
then the same isomorphism holds on the deformed generalized complex structure $\mathcal J_{\epsilon(t)}$
for sufficiently small $t$.
\end{thm*}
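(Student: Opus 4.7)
The plan is to use Hodge theory with a left-invariant metric to decompose harmonic forms into a Lie-algebra-invariant part and a complementary part, and then to invoke Kodaira--Spencer semi-continuity on the complementary piece. I will assume that the deformation parameter $\epsilon(t)$ is itself left-invariant (a natural restriction on nilmanifolds, where small deformations of a left-invariant structure can typically be reduced to left-invariant ones), so that $\mathcal J_{\epsilon(t)}$ remains left-invariant on $\Gamma\backslash G$. Consequently the decomposition $\bigoplus_j\mathcal U^j_t$ is itself left-invariant, arising by left translation from its Lie-algebra restriction $\bigoplus_j\mathfrak U^j_t$; both families vary smoothly in $t$, and the operators $\del_t,\delbar_t$ are equivariant under the left action of $G$.

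Equip $\wedge^\bullet T^\ast M\otimes\C$ with a left-invariant Hermitian metric, so that the formal adjoint $\delbar_t^\ast$ is also left-equivariant. The induced $L^2$ pairing yields an orthogonal decomposition of smooth sections
\[
\mathcal C^\infty(\mathcal U^k_t)\;=\;\mathfrak U^k_t\,\oplus\,\mathfrak U^{k,\perp}_t
\]
into left-invariant (constant) sections and their orthogonal complement, and this decomposition is preserved by the generalized Laplacian $\Delta_{\delbar_t}=\delbar_t\delbar_t^\ast+\delbar_t^\ast\delbar_t$ by left-equivariance. The harmonic space thus splits as an $L^2$-orthogonal direct sum, with the invariant summand canonically identified with $GH^k_{\delbar}(\g,\mathcal J_{\epsilon(t)})$ and the complementary summand measuring exactly the discrepancy with $GH^k_{\delbar}(\Gamma\backslash G,\mathcal J_{\epsilon(t)})$. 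Proving the theorem is therefore equivalent to showing that the complementary summand vanishes for small $t$.

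By hypothesis, the complementary summand vanishes at $t=0$. I would close the argument by applying the Kodaira--Spencer upper semi-continuity principle to the smoothly varying family of elliptic self-adjoint operators $\Delta_{\delbar_t}$ restricted to the complement $\mathfrak U^{\bullet,\perp}_t$: the dimension of the kernel is upper semi-continuous in $t$. Since a non-negative integer-valued upper semi-continuous function that vanishes at $0$ vanishes throughout a neighborhood of $0$, the complementary harmonic space vanishes for small $t$, forcing $\iota_{t,\ast}$ to remain an isomorphism. The main technical point is to make sense of ``the same'' family of elliptic operators on complementary subspaces as $t$ varies: since $\mathfrak U^{k,\perp}_t$ is a subspace of $\mathcal C^\infty(\mathcal U^k_t)$ which itself depends on $t$, one needs to trivialize this family of Hilbert subspaces smoothly, using the smooth dependence of $\mathcal J_{\epsilon(t)}$ on $t$ together with the left-invariance of the splitting, before the classical Kodaira--Spencer argument applies.
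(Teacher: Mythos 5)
Your proposal is correct and follows essentially the same route as the paper: a smooth family of invariant generalized Hermitian metrics, the resulting smooth family of elliptic Laplacians $\Delta_{\delbar}(t)$ preserving the (finite-dimensional, $t$-independent) space of left-invariant forms and its $L^2$-orthogonal complement, and Kodaira--Spencer stability of harmonic spaces to propagate the vanishing of the complementary harmonic summand from $t=0$ to small $t$. The paper phrases the final step as $(\ker \Delta_{\delbar}(0))^{\perp}\cap \ker \Delta_{\delbar}(t)=0$ for small $t$ rather than as upper semi-continuity of the kernel on the complement, which also dissolves your trivialization worry, since the total space $\wedge^{\bullet}\g^{\ast}\otimes\C$ of invariant forms (and hence its orthocomplement for a fixed invariant $L^2$-structure) does not vary with $t$ even though the graded pieces $\mathfrak U^{j}_{t}$ do.
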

For complex case, theorems of this type are found in \cite{console-fino, angella-1, angella-kasuya-2}.

\medskip

Finally, we apply the above result on nilmanifolds to study their space of small deformations. In particular, we prove that any small deformation of a generalized complex structure on a nilmanifold with invariant generalized cohomology is (equivalent to) a left-invariant structure.

\renewcommand{\referenza}{\ref{thm:def-inv}}
\begin{thm*}
Let $\Gamma\backslash G$ be a nilmanifold with a left-invariant generalized complex structure $\mathcal J$; denote by $\g$ be the Lie algebra of $G$.
If the isomorphism $GH_{\delbar}(\g)\cong GH_{\delbar}(\Gamma\backslash G)$ holds on the original generalized complex structure $\mathcal J$,
then any sufficiently small deformation of generalized complex structure is equivalent to a left-invariant complex structure $\mathcal J_{\epsilon}$ with $\epsilon \in \wedge^{2} \mathfrak L^{\ast}$ satisfying the Maurer-Cartan equation.
\end{thm*}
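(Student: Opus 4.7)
My plan is to adapt the Kuranishi-type argument used in the complex case (as in \cite{console-fino, angella-1, angella-kasuya-2}) to the Maurer--Cartan equation governing deformations of generalized complex structures. Recall that small deformations of $\mathcal{J}$ are parametrized by sections $\epsilon\in \mathcal{C}^{\infty}(\wedge^{2}L^{\ast})$ satisfying the generalized Maurer--Cartan equation $\delbar\epsilon+\frac{1}{2}[\epsilon,\epsilon]=0$, modulo the gauge action of generalized diffeomorphisms, and that two such $\epsilon$ defining equivalent structures differ by this gauge action. The goal is to show that in a sufficiently small neighbourhood of $0$, every such $\epsilon$ is gauge-equivalent to one lying in the finite-dimensional subspace $\wedge^{2}\mathfrak{L}^{\ast}$ of invariant forms.

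First I would set up Hodge theory on $(\mathcal{U}^{\bullet},\delbar)$: choose an invariant Hermitian metric on $\Gamma\backslash G$, let $\Delta_{\delbar}$ be the associated Laplacian, $\mathcal{H}$ the harmonic projector, and $\mathcal{G}$ the Green operator, so that $\id=\mathcal{H}+\delbar\delbar^{\ast}\mathcal{G}+\delbar^{\ast}\delbar\mathcal{G}$. The key point, which uses the hypothesis $GH_{\delbar}(\g)\cong GH_{\delbar}(\Gamma\backslash G)$ together with Theorem \ref{thm:coh-def} applied to the deformed structure, is that the harmonic forms can be represented by invariant forms and that $\mathcal{H}$, $\mathcal{G}$, $\delbar^{\ast}$ all preserve the finite-dimensional subspace $\wedge^{\bullet}\mathfrak{L}^{\ast}\subset \mathcal{U}^{\bullet}$. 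This is the crux, and is the main potential obstacle: one needs the invariant subcomplex to compute the full cohomology both before and after deformation, so that the Kuranishi slice construction restricts to the invariant part.

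Next I would construct the Kuranishi family by the standard iterative procedure: pick a basis $\eta_{1},\dots,\eta_{N}$ of the harmonic subspace $\mathbb{H}^{2}(L)$, which by the preceding step consists of invariant forms, set $\epsilon_{1}(t):=\sum_{i}t_{i}\eta_{i}$, and define inductively
\[
\epsilon_{k}(t) \;:=\; -\tfrac{1}{2}\,\delbar^{\ast}\,\mathcal{G}\sum_{i+j=k}[\epsilon_{i}(t),\epsilon_{j}(t)]\;,
\]
assembling $\epsilon(t):=\sum_{k\ge 1}\epsilon_{k}(t)$. Since $[\cdot,\cdot]$ on the Lie algebroid $L$ restricts to a bracket on $\mathfrak{L}$ (because invariant vectors and forms are preserved by the Courant bracket when $\mathcal{J}$ is left-invariant), and since $\delbar^{\ast}\mathcal{G}$ preserves invariance by the previous paragraph, an induction on $k$ shows $\epsilon_{k}(t)\in \wedge^{2}\mathfrak{L}^{\ast}$. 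Convergence for $|t|$ small, and the fact that the $\epsilon(t)$ actually solve Maurer--Cartan when the obstruction $\mathcal{H}[\epsilon(t),\epsilon(t)]$ vanishes, follow exactly as in the classical Kuranishi argument; again the obstruction is invariant by construction.

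Finally I would invoke the completeness/slice property of the Kuranishi family for generalized complex structures: any sufficiently small solution $\tilde\epsilon$ of the Maurer--Cartan equation is gauge-equivalent to some $\epsilon(t)$ in the family just constructed. Since every $\epsilon(t)$ lies in $\wedge^{2}\mathfrak{L}^{\ast}$ and satisfies Maurer--Cartan, this gives the desired conclusion. The hardest step, as noted, is controlling the Kuranishi iteration within the invariant subcomplex; once the quasi-isomorphism $(\wedge^{\bullet}\mathfrak{L}^{\ast},\delbar)\hookrightarrow (\mathcal{U}^{\bullet},\delbar)$ is known to persist under small deformation (from Theorem \ref{thm:coh-def}), the remaining arguments are formal consequences of Hodge theory and the standard deformation machinery.
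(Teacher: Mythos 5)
Your proposal follows essentially the same route as the paper: an invariant Hermitian metric makes the Hodge-theoretic operators preserve the finite-dimensional invariant subcomplex, the cohomology hypothesis forces the harmonic space to lie inside it, the Kuranishi power series $\epsilon_{r}=\frac{1}{2}\sum_{s}d_{L}^{\ast}G[\epsilon_{s},\epsilon_{r-s}]$ therefore stays invariant term by term, and Gualtieri's completeness result (Theorem \ref{defku}) finishes the argument. The only remark worth making is that your appeal to Theorem \ref{thm:coh-def} is superfluous: the iteration involves only operators attached to the undeformed structure, so no statement about the cohomology of the deformed structures is needed --- the paper works entirely at the base point, on the Lie algebroid complex $(\mathcal{C}^{\infty}(\wedge^{\bullet}L^{\ast}),d_{L})$ rather than on $(\mathcal{U}^{\bullet},\delbar)$.
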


This result is a generalization of \cite[Theorem 2.6]{Rollenske}.

\bigskip

\noindent{\sl Acknowledgments.}
The authors would like to thank Gil R. Cavalcanti, S\"onke Rollenske, Giovanni Bazzoni, and Adela Latorre for useful discussions and for helpful comments on preliminary versions.

\section{Generalized complex structures}
Let $M$ be a compact differentiable manifold  of dimension $2n$.
Consider the vector bundle $TM\oplus T^*M$, endowed with the natural symmetric pairing
$$
\scalar{X+\xi}{Y+\eta} \;:=\; \frac{1}{2}\, \left( \xi(Y)+\eta(X) \right) \;.
$$
We define the action of $TM\oplus T^*M$ on $\wedge ^{\bullet}  T^*M$ so that 
$$
(X+\xi)\cdot \rho=i_{X}\rho+\xi\wedge \rho
$$
We define the \emph{Courant bracket} on
the space $\mathcal{C}^{\infty}\left(TX\oplus T^*X\right)$   such that 
\begin{equation}\label{eq:courant}
\left[X+\xi,\, Y+\eta\right] \;:=\; \left[X,\, Y\right] + \mathcal{L}_X\eta - \mathcal{L}_Y\xi - \frac{1}{2}\, \de \left(\iota_X\eta-\iota_Y\xi\right) \;.
\end{equation}
A \emph{generalized complex structure} on $M$ is an endomorphism ${\mathcal J}\in {\rm End}(TM\oplus T^*M)$ such that  ${\mathcal J}^{2}=-1$ and the $i$-eigenbundle $L\subset (TM\oplus T^*M)\otimes \C$ involutive with respect to the Courant bracket.

A form $\rho$ in $\wedge^{\bullet}  T^*M\otimes \C$ is called  \emph{pure } 
 if it can be written as
\[\rho =e^{B+i\omega}\Omega
\]
where $B, \omega \in \wedge ^{2}T^*M$ and $\Omega=\theta_{1}\wedge \dots \wedge \theta_{k}$ with $\theta_{1},\dots, \theta_{k}\in T^*M\otimes \C$.
A pure form  $\rho\in \wedge ^{\bullet} T^*M\otimes \C$  is non-degenerate if 
\[\omega^{n-k}\wedge \Omega\wedge \overline{\Omega}\not=0.
\]
For a generalized complex structure ${\mathcal J}$ with the $i$-eigenbundle $L$, we have the \emph{canonical line bundle } 
 $K\subset \wedge ^{\bullet}  T^*M\otimes \C$   such that
$$
L={\rm Ann}(K)=\left\{v \in (TM\oplus T^*M)\otimes \C \;\middle\vert\; v\cdot K=0\right\} \;.
$$ 
 Any $\rho \in K$ is a non-degenerate pure form and  any $\phi\in \mathcal{C}^{\infty}(K)$ is integrable, i.e., there exists $v\in \mathcal{C}^{\infty}\left(TX\oplus T^*X\right)$ satisfying
\[d\phi=v\cdot \phi.
\]
Conversely, if we have a line bundle  $K\subset \wedge ^{\bullet}  T^*M\otimes \C$ so that  any $\rho \in K$ is a non-degenerate pure form and  any $\phi\in \mathcal{C}^{\infty}(K)$ is integrable, then we have a generalized complex structure whose $i$-eigenbundle is $L={\rm Ann}(K)$.

\medskip

For a generalized complex manifold $(M,\mathcal J)$ with the 
$i$-eigenbundle  $L\subset (TM\oplus T^*M)\otimes \C$ and 
 the canonical line bundle   $K\subset \wedge ^{\bullet}  T^*M\otimes \C$,
for $j\in\Z$, we  define
$$ U^j \;:=\; \wedge^{n-j}\bar L \cdot K \;\subseteq\; \wedge^\bullet X \otimes \C \;. $$
Then we have 
\[\wedge ^{\bullet}  T^*M\otimes \C= \bigoplus_{j=-n}^{n}U^{j}\;.
\]
Denote ${\mathcal U}^{j}=\mathcal{C}^{\infty}(U^{j})$.
Then, by the integrability, we have $d{\mathcal U}^{j}\subset {\mathcal U}^{j-1}\oplus {\mathcal U}^{j+1}$. 
We consider the decomposition  $d=\del+\delbar$ such that 
$$ \del\colon {\mathcal U}^{j}\to  {\mathcal U}^{j+1} \qquad \text{ and } \qquad \delbar\colon {\mathcal U}^{j}\to  {\mathcal U}^{j-1} \;.$$
Hence we have the bi-differential $\Z$-graded complexes $({\mathcal U}^{\bullet},\del,\delbar)$.

We define the generalized Dolbeault cohomologies
\begin{eqnarray*}
 GH^{\bullet}_{\del}(M) &:=& \frac{\ker\left(\del\colon \mathcal U^\bullet \to \mathcal U^{\bullet+1}\right)}{\imm\left(\del\colon \mathcal  U^{\bullet-1} \to \mathcal U^{\bullet}\right)}, \\
 GH^{\bullet}_{\delbar}(M) &:=& \frac{\ker\left(\delbar\colon \mathcal U^\bullet \to \mathcal U^{\bullet-1}\right)}{\imm\left(\delbar\colon \mathcal  U^{\bullet+1} \to \mathcal U^{\bullet}\right)} \;.
\end{eqnarray*}
Define also the generalized Bott-Chern and Aeppli cohomologies
\begin{eqnarray*}
GH^{\bullet}_{BC}(M) &:=& \frac{\ker\left(\del\colon\mathcal  U^{\bullet} \to \mathcal  U^{\bullet+1} \right) \cap \ker\left(\delbar\colon\mathcal U^{\bullet} \to\mathcal U^{\bullet-1} \right)}{\imm \left(\del\delbar \colon\mathcal U^{\bullet} \to\mathcal U^{\bullet} \right)} \;, \\[5pt]
GH^{\bullet}_{A}(M) &:=& \frac{\ker \left( \del\delbar \colon \mathcal U^{\bullet} \to \mathcal U^{\bullet} \right) }{\imm \left(\del \colon\mathcal U^{\bullet-1} \to \mathcal U^{\bullet} \right) + \imm \left( \delbar \colon\mathcal U^{\bullet+1} \to\mathcal U^{\bullet} \right) } \;.
\end{eqnarray*}

\medskip

A {\em generalized Hermitian metric} on a generalized complex manifold $(M,\mathcal J)$ is a self-adjoint orthogonal transformation $\mathcal G\in {\rm End}(TM\oplus TM^{\ast})$ such that $\langle {\mathcal G}v,v\rangle>0$ for $v\not=0$
and $\mathcal J\mathcal G=\mathcal G\mathcal J$.
For a generalized Hermitian metric $\mathcal G$, we can define the generalized Hodge star operator $\star:\mathcal U^{\bullet} \to \mathcal U^{\bullet} $ (see \cite[Section 3]{cavalcanti-jgp}) and its conjugation  $\bar\star$.
Define $\delbar^{\ast}=-\bar\star\delbar \bar\star$ and $\Delta_{\delbar}=\delbar\delbar^{\ast}+\delbar^{\ast}\delbar$.
Then $\Delta_{\delbar}$ is an elliptic operator and every cohomology class $\alpha\in GH_{\delbar}^{\bullet}(M)$ admits a unique representative $a\in \ker \Delta_{\delbar}$.

\medskip

It is known that the vector bundle $L$ with the Courant bracket and the projection $\pi: L\to TM\otimes \C$ is a Lie algebroid.
By this, we have the differential graded algebra structure on $\mathcal{C}^{\infty}(\wedge^{\bullet} L^{\ast})$ with the differential $d_{L}: \mathcal{C}^{\infty}(\wedge^{k} L^{\ast})\to  \mathcal{C}^{\infty}(\wedge^{k+1} L^{\ast})$
such that 
\begin{multline*}d_{L}\omega (v_{1},\dots, v_{k+1}) \;=\; \sum_{i<j} (-1)^{i+j-1}\omega([v_{i},v_{j}],v_{1},\dots ,\hat{v}_{i},\dots, \hat{v}_{j},\dots, v_{k+1})\\
+\sum _{i=1}^{k+1}(-1)^{i}\pi(v_{i})\left(\omega(v_{1},\dots,\hat{v}_{i},\dots,v_{k+1})\right) \;.
\end{multline*}
It is known that $\left(\mathcal{C}^{\infty}(\wedge^{\bullet} L^{\ast}), d_{L}\right)$ is a elliptic complex (see \cite[Proposition 3.12]{gualtieri-annals}).
A {\em generalized holomorphic bundle} is a complex vector bundle $E$ with  a Lie algebroid connection 
\[\delbar\colon  \mathcal{C}^{\infty}(\wedge^{k} L^{\ast}\otimes E)\to  \mathcal{C}^{\infty}(\wedge^{k+1} L^{\ast}\otimes E)
\] 
satisfying $\delbar\circ \delbar=0$.
For a generalized holomorphic bundle $(E,\delbar)$,
we define the Lie algebroid cohomology
\[H^{\bullet}(L,E)= \frac{\ker\left(\delbar\colon \mathcal{C}^{\infty}(\wedge^{\bullet} L^{\ast}\otimes E) \to \mathcal{C}^{\infty}(\wedge^{\bullet+1} L^{\ast}\otimes E)\right)}{\imm\left(\delbar\colon \mathcal{C}^{\infty}(\wedge^{\bullet-1} L^{\ast}\otimes E) \to \mathcal{C}^{\infty}(\wedge^{\bullet} L^{\ast}\otimes E)\right)}
\]

Identifying $L^{\ast}=\overline{L}$ by the pairing,
$\delbar: {\mathcal U}^{n-k}\to  {\mathcal U}^{n-k-1}$ can be viewed as a Lie algebroid connection
\[\delbar:  \mathcal{C}^{\infty}(\wedge^{k} L^{\ast}\otimes K)\to  \mathcal{C}^{\infty}(\wedge^{k+1} L^{\ast}\otimes K) \;.
\] 
such that
\[\delbar(\omega \otimes s)=d_{L}\omega \otimes s+(-1)^{k}\omega \otimes d s.
\]
Hence the canonical line bundle $K$ is generalized holomorphic and we have  $GH^{n-\bullet}_{\delbar}(M)=H^{\bullet}(L,K)$.
For a generalized holomorphic bundle $(E,\delbar)$,
we denote $GH^{n-\bullet}_{\delbar}(M,E)=H^{\bullet}(L,K\otimes E)$.

If there exists a nowhere-vanishing closed section $\rho\in \mathcal{C}^{\infty}(K)$,
we call $\mathcal J$ a {\em generalized Calabi-Yau structure}.
In this case, we have $GH^{n-\bullet}_{\delbar}(M)=H^{\bullet}(L)$.

\medskip

By the identification $L^{\ast}=\overline{L}$,
we can define the Schouten bracket $[\sspace,\ssspace]$ on $\mathcal{C}^{\infty}(\wedge^{\bullet} L^{\ast})$.
For sufficiently small $\epsilon\in \mathcal{C}^{\infty}(\wedge^{2} L^{\ast})$,
we obtain the small deformation of the isotropic subspace
\[L_{\epsilon}=(1+\epsilon)L\subset (TM\oplus T^*M)\otimes \C.
\]
Consider the endomorphism ${\mathcal J}_{\epsilon}\in {\rm End}(TM\oplus T^*M)$ whose   $i$-eigenbundle and $-i$-eigenbundle are $L_{\epsilon}$ and $\overline{L_{\epsilon} }$ respectively.
Then ${\mathcal J}_{\epsilon}$ is a generalized complex structure if and only if $\epsilon$ satisfies the Maurer-Cartan equation:
\[d_{L}\epsilon+\frac{1}{2}[\epsilon,\epsilon]=0.
\]

As similar to Complex Geometry, we can apply the Kuranishi theory.
Choose a Hermitian metric on $L$.
Consider the adjoint operator $d_{L}^{\ast}$, the Laplacian operator $\Delta_{L}=d_{L}d_{L}^{\ast}+d_{L}^{\ast}d_{L}$, the projection $H: \mathcal{C}^{\infty}(\wedge^{\bullet} L^{\ast})\to \ker \Delta_{L}$ and the Green operator $G$ (i.e., the operator on $\mathcal{C}^{\infty}(\wedge^{\bullet} L^{\ast})$ so that $G\Delta_{L}+H={\rm id}$).
Let $\epsilon_{1}\in  \ker \Delta_{L}\cap  \mathcal{C}^{2}(\wedge^{\bullet} L^{\ast})$.
We consider the formal power series $\epsilon(\epsilon_{1})$ with values in $\mathcal{C}^{\infty}(\wedge^{\bullet} L^{\ast})$ given inductively by
 \[\epsilon_{r}(\epsilon_{1})=\frac{1}{2}\sum_{s=1}^{r-1} d_{L}^{\ast}G[\epsilon_{s}(\epsilon_{1}), \epsilon_{r-s}(\epsilon_{1})].\]
Then, for sufficiently small $\epsilon_{1}$, the formal power series $\epsilon(\epsilon_{1})$  converges.
\begin{thm}[{\cite[Theorem 5.5]{gualtieri-annals}}]\label{defku}
Any sufficiently  small deformation of the generalized complex structure $\mathcal J$ is equivalent to a generalized complex structure ${\mathcal J}_{\epsilon(\epsilon_{1})}$ for some $\epsilon_{1}\in  \ker \Delta_{L}\cap  \mathcal{C}^{2}(\wedge^{\bullet} L^{\ast})$ such that $\epsilon(\epsilon_{1})$ satisfies the Maurer-Cartan equation.
\end{thm}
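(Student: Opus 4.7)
The plan is to adapt the classical Kuranishi slice theorem to the differential graded Lie algebra $(\mathcal{C}^{\infty}(\wedge^{\bullet}L^{\ast}),\, d_{L},\, [\cdot,\cdot])$ that controls the deformation problem. A small deformation of $\mathcal J$ corresponds to some $\epsilon^{\prime}\in\mathcal{C}^{\infty}(\wedge^{2}L^{\ast})$ satisfying the Maurer-Cartan equation, and the goal is to produce a gauge-equivalent $\epsilon$ whose harmonic part $\epsilon_{1}:=H\epsilon$ reconstructs $\epsilon$ via the recursion displayed in the statement.

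First I would impose the harmonic gauge $d_{L}^{\ast}\epsilon=0$. Concretely, I look for $\xi\in\mathcal{C}^{\infty}(L^{\ast})$ so that the transformed deformation $\exp(\xi)\cdot\epsilon^{\prime}$ (where the dot is the standard gauge action $\epsilon\mapsto \epsilon+d_{L}\xi+[\xi,\epsilon]+\cdots$ coming from the DGLA) lies in $\ker d_{L}^{\ast}$. The linearization at $\xi=0$ is $-d_{L}^{\ast}d_{L}$, invertible on $(\ker d_{L})^{\perp}$ by ellipticity of the complex $(\mathcal{C}^{\infty}(\wedge^{\bullet}L^{\ast}), d_{L})$, so the implicit function theorem in a suitable Sobolev or Hölder norm produces the required $\xi$ for $\epsilon^{\prime}$ small enough, and I set $\epsilon:=\exp(\xi)\cdot \epsilon^{\prime}$.

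Next I would show that this $\epsilon$ is literally the series $\epsilon(\epsilon_{1})$ with $\epsilon_{1}:=H\epsilon$. Applying the Hodge identity $\id = H + d_{L}d_{L}^{\ast}G + d_{L}^{\ast}d_{L}G$ to $\epsilon$, using $d_{L}^{\ast}\epsilon=0$ together with the Maurer-Cartan equation $d_{L}\epsilon = -\tfrac{1}{2}[\epsilon,\epsilon]$, gives the fixed-point identity
\[
\epsilon \;=\; \epsilon_{1} \,-\, \tfrac{1}{2}\, d_{L}^{\ast}G\,[\epsilon,\epsilon]\;.
\]
Expanding $\epsilon=\sum_{r\geq 1}\epsilon_{r}$ in homogeneous components with respect to $\epsilon_{1}$ and matching order by order recovers exactly the recursion $\epsilon_{r}=\tfrac{1}{2}\sum_{s=1}^{r-1}d_{L}^{\ast}G[\epsilon_{s},\epsilon_{r-s}]$. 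Standard elliptic estimates on $d_{L}^{\ast}G$ then yield a geometric majorization and hence convergence of $\sum_{r}\epsilon_{r}$ in the chosen norm for $\epsilon_{1}$ small.

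The delicate point — and the step I would expect to be the main obstacle — is that the fixed-point identity above is strictly weaker than Maurer-Cartan: solving it only forces $d_{L}^{\ast}\bigl(d_{L}\epsilon+\tfrac{1}{2}[\epsilon,\epsilon]\bigr)=0$, not the equation itself. To conclude that $\epsilon(\epsilon_{1})$ truly satisfies the Maurer-Cartan equation one must verify that the harmonic obstruction $H\,[\epsilon(\epsilon_{1}),\epsilon(\epsilon_{1})]$ vanishes. Here this is automatic because $\epsilon(\epsilon_{1})$ is gauge-equivalent to the genuinely integrable $\epsilon^{\prime}$, but carrying out the bookkeeping — checking that the gauge-fixing step commutes with the recursion and that the Jacobi identity plus $d_{L}^{2}=0$ prevent any further obstruction — is the technical core of the argument.
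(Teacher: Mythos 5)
The paper does not prove this statement at all: it is imported verbatim as \cite[Theorem 5.5]{gualtieri-annals}, so your attempt can only be compared with Gualtieri's original argument, and your sketch is indeed essentially that argument --- the classical Kuranishi method transplanted to the DGLA $(\mathcal{C}^{\infty}(\wedge^{\bullet}L^{\ast}),d_{L},[\cdot,\cdot])$, with the ellipticity of the Lie algebroid complex supplying the Hodge decomposition, the Green operator, and the implicit function theorem. Two corrections to your write-up. First, a sign: from $d_{L}^{\ast}\epsilon=0$ and $d_{L}\epsilon=-\tfrac{1}{2}[\epsilon,\epsilon]$ your own computation gives $\epsilon=\epsilon_{1}-\tfrac{1}{2}d_{L}^{\ast}G[\epsilon,\epsilon]$, whose order-by-order expansion is $\epsilon_{r}=-\tfrac{1}{2}\sum_{s=1}^{r-1}d_{L}^{\ast}G[\epsilon_{s},\epsilon_{r-s}]$, not the recursion with $+\tfrac{1}{2}$ that you claim to ``recover exactly''; this is a conventions mismatch to be resolved, not an agreement. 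Second, the step you single out as the technical core is in fact vacuous in the direction you argue: you start from a genuine deformation, so $\epsilon^{\prime}$ satisfies Maurer--Cartan, the gauge action preserves Maurer--Cartan, and hence $H[\epsilon,\epsilon]=0$ holds automatically --- no bookkeeping about obstructions is needed. The places where work is actually required, and which your sketch passes over, are (a) showing that the formal DGLA gauge action by $\exp(\xi)$ with $\xi\in\mathcal{C}^{\infty}(L^{\ast})$ is realized by a genuine equivalence of generalized complex structures, i.e.\ a diffeomorphism isotopic to the identity combined with an exact $B$-field transform acting on $TM\oplus T^{\ast}M$ --- this is where the Courant-algebroid geometry enters and it is not a formality; and (b) the uniqueness of small solutions of the fixed-point equation in a fixed Sobolev or H\"older ball, which is what lets you identify your gauge-fixed $\epsilon$ with the convergent series $\epsilon(H\epsilon)$. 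With those two points supplied the argument is complete and matches the cited source.
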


\begin{ex}
Let $M$ be a compact $2n$-dimensional manifold endowed with a symplectic structure $\omega \in \wedge^2 M$. Consider the induced isomorphism $\omega \colon TM \to T^*M$.
The symplectic structure gives rise to the generalized complex structure
$$ \mathcal{J}_\omega \;:=\;
\left(
\begin{array}{c|c}
 0 & -\omega^{-1} \\
\hline
 \omega & 0
\end{array}
\right) \;. $$

In this case, we obtain the $i$-eigenbundle
$$L=\{X-i\omega(X): X\in TM\otimes \C\},
$$
the canonical line bundle $K=\langle e^{i\omega} \rangle$
and
$$ U^{n-\bullet} \;=\; \Phi\left(\wedge^\bullet X \otimes \C\right) \;, $$
where
$$ \Phi(\alpha) \;:=\; \exp{\left(\im\omega\right)}\, \left(\exp{\left(\frac{\Lambda}{2\im}\right)}\, \alpha\right) \;, $$
and $\Lambda := -\iota_{\omega^{-1}}$.
In particular, we have the Lie algebroid isomorphism $TM\otimes \C\cong L$, $\mathcal J$ is generalized Calabi-Yau and hence we have an isomorphism $H^{\ast}(M)\cong H^{\ast}(L)\cong  GH^{n-\bullet}_{\delbar}(M)$.
Moreover, 
we have \cite[Corollary 1]{cavalcanti-jgp},
$$ \Phi\de \;=\; \delbar\Phi  \quad\text{ and }\quad \Phi\de^{\Lambda} \;=\; 2\im\del\Phi \;, $$
where $\de^\Lambda:=\left[\de,\Lambda\right]$
and this implies that 
$GH^{k}_{BC}\left(X\right)$ and $GH^{k}_{A}\left(X\right)$ are isomorphic to the symplectic Bott-Chern and Aeppli cohomologies introduced and studied by L.-S. Tseng and S.-T. Yau, see \cite{tseng-yau-1, tseng-yau-2, tseng-yau-3, tsai-tseng-yau}.
\end{ex}

\begin{ex}
Let $M$ be a compact $2n$-dimensional manifold endowed with a complex structure $J\in\End(TM)$. The complex structure induces the generalized complex structure

where $J^*\in\End(T^*M)$ denotes the dual endomorphism of $J\in\End(TX)$.
In this case, we obtain the $i$-eigenbundle $L=T^{0,1}M\oplus T^{\ast1,0}M$,
the canonical line bundle $K=\wedge^{n} T^{\ast1,0}M$ and
$$\mathcal  U^\bullet \;=\; \bigoplus_{p-q=\bullet}\wedge^{p,q}X \;, $$
with the differentials
$$ \del \;=\; \del_J \qquad \text{ and } \qquad \delbar \;=\; \delbar_J \;, $$
where $\del_J $ and $\delbar_J$ are the usual Dolbeault operators on a complex manifold.
The Lie algebroid complex $\mathcal{C}^{\infty}(\wedge ^{\bullet}L^{\ast})$ is 
 $\mathcal{C}^{\infty}(\wedge ^{\bullet}(T^{1,0}M\oplus T^{\ast0,1}M) )$ with the differential $d_{L}$ which is the usual Dolbeault operator.
\end{ex}

\section{Fibrations and spectral sequences}\label{fisp}

A \emph{symplectic fibre bundle} is a smooth fibre bundle $p\colon X\to B$ so that the fibre $F$ is a compact symplectic manifold and
the structural group is the group of symplectomorphisms.
Let $p\colon X\to B$ be a symplectic fibre bundle with a generic fibre $(F,\sigma)$
such that:
\begin{itemize}
\item $B$ is a compact complex manifold of complex dimension $k$;
\item we have a closed form $\omega$ on the total space $X$ which restricts to the symplectic form $\sigma$ on the generic $F$.
\end{itemize}
Taking a local trivialization $U\times F\subset X$, for a local holomorphic coordinates set $(z_{1},\dots,z_{k})$ in $U$
we obtain a non-degenerate pure form
\[\rho=e^{i\omega}dz_{1}\wedge\dots \wedge dz_{k}
\]
and it gives a generalized complex structure on $E$ whose $i$-eigenbundle $L$ is given by
\[L_{\vert U}=T^{0,1}U\oplus T^{\ast1,0}U\oplus \{X-i\omega(X): X\in TF\otimes \C\}.
\]
We consider the sub-bundle $S$ so that $ S_{\vert U}=\{X-i\omega(X): X\in TF\otimes \C\}\subset L_{\vert U}$.
Then, $S$ is  involutive with respect to the Courant bracket.

\medskip

For $b\in B$ and $F_{b}=p^{-1}(b)$, denoting by $H^{\bullet}(F_{b})$ the $\C$-valued de Rham cohomology of $F_{b}$, we consider the vector bundle ${\bf H}(F)=\bigcup_{x\in B}H^{\bullet}(F_{b})$.
Then  ${\bf H}(F)$ is a flat vector bundle over $B$.
Hence, in this case,  ${\bf H}(F)$ is a holomorphic vector bundle over the complex manifold $B$.

Consider the  bundle ${\mathcal F}=TF_{b}\otimes\C$ of the vectors tangent to the fibres.
Then ${\mathcal F}$ is a Lie algebroid.
Consider the Lie algebroid cohomology $H^{\ast}({\mathcal F})$ then we have an isomorphism 
\[H^{\ast}({\mathcal F})\cong \mathcal{C}^{\infty}({\bf H}(F))
\]
see \cite[Chapter I.2.4]{hattori}.

\medskip

Let $(W,\delbar)$ be a generalized holomorphic bundle over $X$.
Define the subspace $F^{p}\mathcal{C}^{\infty}(\wedge^{\bullet} L^{\ast}\otimes W)\subset \mathcal{C}^{\infty}(\wedge^{\bullet} L^{\ast}\otimes W)$
so that
\begin{multline*}
F^{p}\mathcal{C}^{\infty}(\wedge^{p+q} L^{\ast}\otimes W)\\=
\left\{\phi\in \mathcal{C}^{\infty}(\wedge^{p+q} L^{\ast}\otimes W) \;\middle\vert\; \phi(X_{1},\dots, X_{p+q})=0 \text{ for } X_{\ell_{1}},\dots, X_{\ell_{q+1}}\in S \right\}.
\end{multline*}
Then $F^{p}\mathcal{C}^{\infty}(\wedge^{\bullet} L^{\ast}\otimes W)$ is a decreasing bounded filtration of $(\mathcal{C}^{\infty}(\wedge^{\bullet} L^{\ast}\otimes W),\delbar)$.
Hence we obtain the spectral sequence $\left\{ E_{r}^{\bullet,\bullet} \right\}_{r}$ for  this filtration.

We suppose that $W=p^{\ast}W^{\prime}$ for a holomorphic vector bundle $W^{\prime}$ over the complex manifold $B$.
For a local holomorphic coordinates set $(z_{1},\dots, z_{k})$ of $B$, locally we have
\[E^{p,q}_{0}\cong \wedge^{p} \left\langle d\bar{z}_{1},\dots, d\bar{z}_{k}, \frac{\partial}{\partial z_{1}},\dots  \frac{\partial}{\partial z_{k}} \right\rangle \otimes_{\mathcal{C}^{\infty}(B)} W^{\prime} \otimes_{\mathcal{C}^{\infty}(B)} \mathcal{C}^{\infty}(\wedge^{q} S^{\ast})
\]
with the differential
$$ d_{0}={\rm id}\otimes d_{S} $$
where $d_{S}$ is the differential on the Lie algebroid complex $\mathcal{C}^{\infty}(\wedge^{q} S^{\ast})$.
By using the $\omega$, we have a Lie algebroid isomorphism ${\mathcal F}\ni X\mapsto X-i\omega(X)\in S$.
Hence we obtain
\[E^{p,q}_{1}\cong \wedge^{p} \left\langle d\bar{z}_{1},\dots, d\bar{z}_{k}, \frac{\partial}{\partial z_{1}},\dots  \frac{\partial}{\partial z_{k}} \right\rangle \otimes_{\mathcal{C}^{\infty}(B)} W^{\prime} \otimes_{\mathcal{C}^{\infty}(B)} {\bf H}^{\bullet}(F)
\]
with the differential
$$d_{1}=\delbar_{B}$$
where $\delbar_{B}$ is the usual Dolbeault operator on the complex manifold $B$.
Thus, globally, we obtain
\[E^{p,q}_{1}\cong \mathcal{C}^{\infty}(\wedge^{p}L_{B}^{\ast}\otimes W^{\prime}\otimes {\bf H}^{q}(F))
\]
with the differential $d_{1}=\delbar$ which is the Lie algebroid connection on the holomorphic bundle $W^{\prime}\otimes {\bf H}(F)$
where $L_{B}=T^{0,1}B\oplus T^{\ast1,0}B$.
Hence we have
\[E^{p,q}_{2}\cong H^{p}(L_{B}, W^{\prime}\otimes {\bf H}^{q}(F)).
\]

We have shown the following result.
\begin{thm}\label{spect}
Let $p\colon X\to B$ a symplectic fibre bundle with a generic fibre $(F,\sigma)$
such that:
\begin{itemize}
\item $B$ is a compact complex manifold of complex dimension $k$;
\item we have a closed form $\omega$ on the total space $X$ which restricts to the symplectic form $\sigma$ on the generic $F$.
\end{itemize}
Consider the generalized complex structure $\mathcal J$ on $X$ defined by $\omega$ and the complex structure of $B$
and the $i$-eigenbundle $L$ of $\mathcal J$.
Let $W$ be a complex vector bundle over $X$
such that $W=p^{\ast}W^{\prime}$ for a holomorphic vector bundle $W^{\prime}$ over the complex manifold $B$.
We regard $W$ as a generalized holomorphic bundle.

Then there exists a spectral sequence $\left\{ E_{r}^{\bullet,\bullet} \right\}_{r}$ which converges to $H^{\bullet}(L, W)$
such that 
\[E^{p,q}_{2}\cong H^{p}(L_{B}, W^{\prime}\otimes {\bf H}^{q}(F)).
\]
\end{thm}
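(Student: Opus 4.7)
The plan is to construct the asserted spectral sequence as the one associated with the canonical decreasing filtration on the Lie algebroid complex $(\mathcal{C}^{\infty}(\wedge^{\bullet} L^{\ast}\otimes W),\delbar)$ already introduced before the theorem statement, and then to compute its first two pages. Explicitly, $F^{p}\mathcal{C}^{\infty}(\wedge^{p+q} L^{\ast}\otimes W)$ consists of forms annihilated as soon as $q+1$ of their entries lie in the subbundle $S\subset L$ tangent to the fibres. I would first verify that this filtration is decreasing, bounded (it terminates because $L/S$ has complex rank $2k$), and preserved by $\delbar$. The last point is where the integrability hypothesis enters: since $S$ is involutive for the Courant bracket, the defining formula for $d_{L}$ shows that $\delbar F^{p}\subset F^{p}$, so we have a standard bounded spectral sequence converging to $H^{\bullet}(L,W)$.

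Next I would compute $E_{0}$ and $E_{1}$ in local trivializations. Over a chart $U\times F$ with holomorphic coordinates $(z_{1},\dots,z_{k})$ on $U$, one has a splitting
\[
L|_{U\times F}\;\cong\; T^{0,1}U\oplus T^{\ast 1,0}U\oplus S|_{U\times F},
\]
which makes the local model of $E_{0}^{p,q}$ stated in the paper manifest. The induced $d_{0}$ is the part of $\delbar$ that varies along the fibre directions only, and because $W=p^{\ast}W'$ its sections are locally constant along the fibre factor, so $d_{0}=\mathrm{id}\otimes d_{S}$. The closed form $\omega$ then provides a genuine Lie algebroid isomorphism $\mathcal F\ni X\mapsto X-i\omega(X)\in S$, whence the Hattori identification $H^{\ast}(\mathcal F)\cong \mathcal{C}^{\infty}(\mathbf{H}(F))$ gives the local form of $E_{1}^{p,q}$.

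The third step is globalization. Picking an auxiliary fibrewise metric, one chooses harmonic representatives of the fibrewise de Rham cohomology, which vary smoothly in the base parameter. One then checks that under the symplectomorphism transition functions of the bundle $p\colon X\to B$ these local identifications patch into a global isomorphism $E_{1}^{p,q}\cong \mathcal{C}^{\infty}(\wedge^{p}L_{B}^{\ast}\otimes W'\otimes \mathbf{H}^{q}(F))$, with $L_{B}=T^{0,1}B\oplus T^{\ast 1,0}B$. The Gauss-Manin flat structure on $\mathbf{H}(F)$ induces, together with the complex structure on $B$, the holomorphic structure used here. On this global model the induced $d_{1}$ must coincide with $\delbar_{B}$ twisted by the resulting Lie algebroid connection on the holomorphic bundle $W'\otimes\mathbf{H}(F)$, and taking cohomology yields $E_{2}^{p,q}\cong H^{p}(L_{B},W'\otimes\mathbf{H}^{q}(F))$.

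The main obstacle I anticipate is precisely this globalization: local computations are clean, but one needs to verify equivariance of the identifications under the symplectomorphism transition cocycle of the fibre bundle, so that the $E_{1}$ term really is a module of sections of a global holomorphic vector bundle and $d_{1}$ is the expected Lie algebroid differential on the base. The hypothesis $W=p^{\ast}W'$ is essential at this step, because it is exactly what forces the coefficient bundle $W$ to be constant along fibres and allows the vertical part of $\delbar$ to decouple cleanly at the $E_{0}$ level; without this pullback assumption, extra cross terms would enter and $d_{0}$ would fail to be purely fibrewise.
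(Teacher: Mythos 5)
Your proposal follows essentially the same route as the paper: the filtration of $\mathcal{C}^{\infty}(\wedge^{\bullet}L^{\ast}\otimes W)$ by the number of arguments in the fibre subbundle $S$, the local computation of $E_{0}$ and $E_{1}$ using the splitting $L|_{U}\cong T^{0,1}U\oplus T^{\ast 1,0}U\oplus S|_{U}$ together with the Lie algebroid isomorphism $\mathcal{F}\ni X\mapsto X-i\omega(X)\in S$ and Hattori's identification $H^{\ast}(\mathcal{F})\cong\mathcal{C}^{\infty}(\mathbf{H}(F))$, and the globalization to $E_{1}^{p,q}\cong\mathcal{C}^{\infty}(\wedge^{p}L_{B}^{\ast}\otimes W'\otimes\mathbf{H}^{q}(F))$ with $d_{1}$ the Lie algebroid connection on the base. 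The argument is correct and matches the paper's proof, with your remarks on involutivity of $S$ and on the role of $W=p^{\ast}W'$ making explicit points the paper leaves implicit.
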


Set $W=K$ which is the canonical line bundle of $(X,\mathcal J)$.
Then as a bundle, we have $p^{\ast}K_{B}\cong K$
where $K_{B}$ is the canonical line bundle of the complex manifold $B$.
Hence we have:
\begin{cor}\label{spect-dol}
Consider the same setting in Theorem \ref{spect}.
Suppose $\dim B=2k$, $\dim F=2\ell$. 

Then there exists a spectral sequence $\left\{ E_{r}^{\bullet,\bullet} \right\}_{r}$ which converges to $GH_{\delbar}^{k+\ell-\bullet}(X)$
such that 
\[E^{p,q}_{2}\cong GH_{\delbar}^{k-p}(B,{\bf H}^{\ell-q}(F))\;.
\]
\end{cor}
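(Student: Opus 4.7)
The plan is to specialize Theorem \ref{spect} to the choice $W = K$, the canonical line bundle of $(X,\mathcal{J})$. The crucial ingredient, recorded in the paragraph immediately preceding the corollary, is the bundle isomorphism $K \cong p^{\ast}K_B$, where $K_B$ denotes the canonical line bundle of the complex base $B$; this comes from the local presentation $\rho = e^{i\omega}\,dz_1\wedge\cdots\wedge dz_k$ of a section of $K$, since $e^{i\omega}$ is globally defined, closed (as $\omega$ is closed), and nowhere vanishing, so it furnishes a global trivializing factor against $dz_1\wedge\cdots\wedge dz_k \in p^{\ast}K_B$. This identification allows us to apply Theorem \ref{spect} with $W^{\prime} = K_B$.

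With that choice, the abutment $H^{\bullet}(L, K)$ is rewritten via the definitional identity $GH_{\delbar}^{n-\bullet}(M) = H^{\bullet}(L, K)$, with $n = k + \ell$ the complex dimension of $X$, yielding the stated convergence to $GH_{\delbar}^{k+\ell-\bullet}(X)$. On the $E_2$-page, Theorem \ref{spect} produces $E_2^{p,q} \cong H^p(L_B, K_B \otimes \mathbf{H}^q(F))$, and the same definitional identity applied to $B$ (of complex dimension $k$) with coefficients in the holomorphic bundle $\mathbf{H}^q(F)$ rewrites this as $GH_{\delbar}^{k-p}(B, \mathbf{H}^q(F))$.

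The remaining relabeling from $\mathbf{H}^q(F)$ to $\mathbf{H}^{\ell-q}(F)$ is a bookkeeping step, reflecting the symplectic-to-generalized conversion $H^q_{dR}(F)\cong GH_{\delbar}^{\ell-q}(F)$ from the symplectic example, and ensures that the bidegree $(p,q)$ on the $E_2$-page carries total $GH_{\delbar}$-degree $(k-p)+(\ell-q) = k+\ell - (p+q)$, consistent with the convergence degree. In summary, there is no genuinely difficult step: once the specialization $W = K = p^{\ast}K_B$ is carried out, the corollary is a direct translation of Theorem \ref{spect} into the $GH_{\delbar}$ notation, with all substance already contained in the theorem.
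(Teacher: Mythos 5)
Your proposal matches the paper's own derivation, which consists precisely of setting $W=K$, observing that $K\cong p^{\ast}K_{B}$ as a bundle, and then rewriting the abutment $H^{\bullet}(L,K)$ and the $E_{2}$-term $H^{p}(L_{B},K_{B}\otimes \mathbf{H}^{q}(F))$ via the definitional identities $GH^{n-\bullet}_{\delbar}(\cdot)=H^{\bullet}(L,K)$ and $GH^{n-\bullet}_{\delbar}(\cdot,E)=H^{\bullet}(L,K\otimes E)$. Your extra remarks---the local justification of $K\cong p^{\ast}K_{B}$ from $\rho=e^{i\omega}dz_{1}\wedge\dots\wedge dz_{k}$, and reading $\mathbf{H}^{\ell-q}(F)$ as the regrading $H^{q}_{dR}(F)\cong GH^{\ell-q}_{\delbar}(F)$ so that total degrees match---are consistent with (and slightly more explicit than) what the paper writes.
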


\section{Generalized complex structures on Lie algebras}
Let $\g$ be a $2n$-dimensional Lie algebra.
We consider the Lie algebra ${\mathbb D}\g=\g\oplus \g^{\ast}$ with the bracket
\[[X+\zeta, Y+\eta]=[X,Y]+\mathcal{L}_{X}\eta-\mathcal{L}_{Y}\zeta
\]
for $X, Y\in\g$ and $\zeta, \eta\in \g^{\ast}$.
A generalized complex structure on $\g$ is a complex structure on  ${\mathbb D}\g$ which is orthogonal with respect to the pairing
\[\langle X+\zeta, Y+\eta\rangle=\frac{1}{2}(\zeta(Y)+\eta(X)).
\]

Consider the complex $\wedge ^{\bullet}\g^{\ast}_{\C}$ of the Lie algebra $\g_\C:=\g\otimes_\R\C$.
A form $\rho\in \wedge ^{\bullet}\g^{\ast}_{\C}$ is a pure form of type $k$ if it can be written as
\[\rho =e^{B+i\omega}\Omega
\]
where $B, \omega \in \wedge ^{2}\g^{\ast}$ and $\Omega=\theta_{1}\wedge \dots \wedge \theta_{k}$ with $\theta_{1},\dots, \theta_{k}\in \wedge ^{1}\g^{\ast}_{\C}$.
A pure form $\rho\in \wedge ^{\bullet}\g^{\ast}_{\C}$ of type $k$ is non-degenerate if 
\[\omega^{n-k}\wedge \Omega\wedge \overline{\Omega}\not=0.
\]
A pure form  $\rho\in \wedge ^{\bullet}\g^{\ast}_{\C}$ of type $k$ is integrable if there exists $X+\zeta\in {\mathbb D}\g$
such that 
\[d\rho=(X+\zeta)\cdot\rho.
\]

\begin{thm}[\cite{cavalcanti-gualtieri}]\label{nilcal}
If $\g$ is nilpotent, then any  non-degenerate integrable pure form is closed.
\end{thm}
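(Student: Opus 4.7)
My plan is to first reduce the claim to a concrete condition on $\Omega$ alone, then exploit nilpotency of $\g$ via a modular-character argument on the annihilator Lie algebra $L$. Integrability provides $v = X+\zeta \in {\mathbb D}\g_\C$ with $d\rho = \iota_X\rho + \zeta\wedge\rho$. Writing $\rho = e^\beta\wedge\Omega$ with $\beta := B+i\omega$, and using that $d$ and $\iota_X$ are super-derivations while $e^\beta$ contains only even-degree pieces (so $de^\beta = d\beta\wedge e^\beta$ and $\iota_X e^\beta = (\iota_X\beta)\wedge e^\beta$), the integrability equation becomes
\[
e^\beta\wedge\bigl(d\beta\wedge\Omega + d\Omega\bigr) \;=\; e^\beta\wedge\bigl((\zeta+\iota_X\beta)\wedge\Omega + \iota_X\Omega\bigr).
\]
Since $e^\beta$ is $\wedge$-invertible, this reduces to an identity between the inner parenthesized forms, which splits along the three distinct degrees $k-1$, $k+1$, $k+3$ to yield
\[
\iota_X\Omega = 0, \qquad d\Omega = \eta\wedge\Omega, \qquad d\beta\wedge\Omega = 0,
\]
where $\eta := \zeta+\iota_X\beta$. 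Consequently $d\rho = e^\beta\wedge\eta\wedge\Omega$, and the theorem reduces (looking at the degree-$(k+1)$ component) to showing $\eta\wedge\Omega = 0$, equivalently $d\Omega = 0$.

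To inject nilpotency, observe that on a Lie algebra the Courant bracket coincides with the semidirect-product bracket $[X+\xi,Y+\eta] = [X,Y]_\g + \ad^*_X\eta - \ad^*_Y\xi$ on ${\mathbb D}\g = \g\oplus\g^*$, since the correction $\tfrac{1}{2}d(\iota_X\eta - \iota_Y\xi)$ is the differential of a scalar. As $\g$ is nilpotent, every coadjoint $\ad^*_X$ is nilpotent, so ${\mathbb D}\g = \g\ltimes\g^*$ is a nilpotent Lie algebra, and its complex Lie subalgebra $L = \mathrm{Ann}(\rho)\subset{\mathbb D}\g_\C$ is nilpotent as well. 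The integrability singles out a unique $v\in\bar L$ with $d\rho = v\cdot\rho$ (unique because $L\cdot\rho = 0$ and the Clifford map $\bar L\to U^{n-1}$ is injective, and independent of the scaling $\rho\mapsto c\rho$). Under the standard identification of the canonical line $K$ with the top exterior power of $L^*$ as an $L$-module, this $v$, regarded via the pairing as an element of $L^*$, coincides up to sign with the modular character $\chi\colon L\to\C$, $\chi(Y) = \tr(\ad_Y|_L)$. Nilpotency of $L$ forces each $\ad_Y$ to be nilpotent and hence traceless, so $\chi\equiv 0$; therefore $v = 0$ and $d\rho = 0$.

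The delicate point is the identification in the second paragraph of $v$ with the modular character of $L$, which relies on the (standard but slightly involved) correspondence between pure spinors and maximal isotropic subspaces. A more pedestrian alternative, bypassing this representation-theoretic input, is to stay with the reduction of the first paragraph and use unimodularity of $\g$: for any unimodular Lie algebra, $d\colon\wedge^{2n-1}\g^*\to\wedge^{2n}\g^*$ vanishes identically (apply Cartan's magic formula to any $\iota_{e_i}\mu$ with $\mu$ a volume form). Using $d\Omega = \eta\wedge\Omega$ and its conjugate to rewrite the top form $\omega^{n-k-1}\wedge\eta\wedge\bar\eta\wedge\Omega\wedge\bar\Omega$ as $\pm\,\omega^{n-k-1}\wedge d\Omega\wedge d\bar\Omega$, then applying Leibniz and invoking $d\beta\wedge\Omega = 0$ to absorb the resulting $d\omega$-terms, one shows this top form is exact and hence vanishes; combined with non-degeneracy $\omega^{n-k}\wedge\Omega\wedge\bar\Omega\neq 0$, this forces $\eta\wedge\Omega = 0$.
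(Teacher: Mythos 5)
First, a point of comparison: the paper does not prove this statement at all --- Theorem \ref{nilcal} is imported from \cite{cavalcanti-gualtieri} as a black box --- so your attempt can only be judged on its own terms. Your opening reduction is correct: splitting $d\beta\wedge\Omega+d\Omega=(\zeta+\iota_X\beta)\wedge\Omega+\iota_X\Omega$ by degree does give $\iota_X\Omega=0$, $d\Omega=\eta\wedge\Omega$, $d\beta\wedge\Omega=0$, and reduces the theorem to $\eta\wedge\Omega=0$; likewise ${\mathbb D}\g=\g\ltimes_{\ad^*}\g^*$ is nilpotent when $\g$ is, so ${\mathfrak L}$ is a nilpotent Lie algebra. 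The problem is the step connecting these two observations. Your key assertion --- that the character by which ${\mathfrak L}$ acts on $K=\langle\rho\rangle$ is, up to sign, the modular character $Y\mapsto\tr(\ad_Y\vert_{\mathfrak L})$ --- is false as stated. Counterexample: let $\g$ be the real Lie algebra underlying $\mathfrak{aff}(\C)$, so $\g^{1,0}=\langle Z_1,Z_2\rangle$ with $[Z_1,Z_2]=Z_2$ and $[\g^{1,0},\g^{0,1}]=0$, and take $\rho=\theta^1\wedge\theta^2$ (type $k=n=2$, $B=\omega=0$). Then $d\rho=-\theta^1\wedge d\theta^2=\theta^1\wedge\theta^1\wedge\theta^2=0$, so the character of $K$ vanishes, yet $\tr(\ad_{\bar Z_1}\vert_{\mathfrak L})=\tr(\ad_{\bar Z_1}\vert_{\g^{0,1}})=1\neq 0$. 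The correct statement is the \emph{relative} one, $\chi_K(u)$ proportional to $\tr(\ad_u\vert_{\mathfrak L})-\tr(\ad_{\pi(u)}\vert_{\g_\C})$ (equivalently $K^{\otimes 2}\cong\wedge^{\mathrm{top}}{\mathfrak L}\otimes\det\g^*_\C$ via the Mukai pairing); this does still vanish for nilpotent $\g$, so your strategy is repairable, but that identification is essentially the entire content of the theorem and cannot be left as an unproved (and, in the unrelativized form you give, incorrect) assertion. Note also that a mere character of a nilpotent Lie algebra need not vanish, so identifying \emph{which} character acts on $K$ is unavoidable.

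The ``pedestrian alternative'' does not close the gap either. Its final implication fails: the vanishing of the single scalar $\omega^{n-k-1}\wedge\eta\wedge\bar\eta\wedge\Omega\wedge\bar\Omega$ does not force $\eta\wedge\Omega=0$. Decomposing $\eta$ along $\langle\theta_i\rangle\oplus\langle\bar\theta_i\rangle\oplus(\text{transverse part})$, any component in $\langle\bar\theta_i\rangle$ is killed by $\wedge\,\bar\Omega$ and hence invisible to this top form while still contributing to $\eta\wedge\Omega$; and the transverse component $c$ enters only through $\omega^{-1}(c,\bar c)$, which vanishes for plenty of nonzero $c$ (e.g.\ $c$ real, where $c\wedge\bar c=0$). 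Moreover the absorption of the $d\omega$-terms is unjustified: $d\beta\wedge\Omega=0$ with $\beta=B+i\omega$ yields only $d\omega\wedge\Omega=i\,dB\wedge\Omega$, not $d\omega\wedge\Omega=0$, so unimodularity alone does not exhibit $\omega^{n-k-1}\wedge d\Omega\wedge d\bar\Omega$ as exact (and the whole expression is vacuous when $k=n$). Both routes therefore stall at the same point; what is missing is a genuine input of nilpotency beyond unimodularity, e.g.\ the Malcev-basis/triangularity argument of \cite{cavalcanti-gualtieri} or a proof of the relative modular-class identity above.
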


For a non-degenerate integrable pure form $\rho\in \wedge ^{\bullet}\g^{\ast}_{\C}$ of type $k$, 
we have the sub Lie algebra ${\mathfrak L}\subset {\mathbb D}\g_{\C}$ such that
\[{\mathfrak L}={\rm Ann}(\rho)= \{X+\zeta\in {\mathbb D}\g_{\C}\vert  (X+\zeta)\cdot\rho=0\}.
\]
We have the decomposition ${\mathbb D}\g_{\C}={\mathfrak L}\oplus \overline{\mathfrak L}$ and this gives a generalized complex structure on $\g$.

\medskip

Define ${\mathfrak U}^{\bullet}\subset  \wedge ^{\bullet}\g^{\ast}_{\C}$ such that  ${\mathfrak U}^{n}=\langle \rho\rangle$ and
${\mathfrak U}^{n-r}=\wedge^{r} \overline{\mathfrak L} \cdot {\mathfrak U}^{n}$.
Then, by the integrability, we have $d{\mathfrak U}^{j}\subset {\mathfrak U}^{j-1}\oplus {\mathfrak U}^{j+1}$. 
We consider the decomposition  $d=\del+\delbar$ such that 
$\del\colon {\mathfrak U}^{j}\to  {\mathfrak U}^{j+1}$ and $\delbar\colon {\mathfrak U}^{j}\to  {\mathfrak U}^{j-1}$.
Hence we have the bi-differential $\Z$-graded complex $({\mathfrak U}^{\bullet},\del,\delbar)$.
We define
\begin{eqnarray*}
 GH^{\bullet}_{\del}(\g) &:=& \frac{\ker\left(\del\colon \mathfrak U^\bullet \to \mathfrak U^{\bullet+1}\right)}{\imm\left(\del\colon \mathfrak  U^{\bullet-1} \to \mathfrak U^{\bullet}\right)}, \\
 GH^{\bullet}_{\delbar}(\g) &:=& \frac{\ker\left(\delbar\colon \mathfrak U^\bullet \to \mathfrak U^{\bullet-1}\right)}{\imm\left(\delbar\colon \mathfrak  U^{\bullet+1} \to \mathfrak U^{\bullet}\right)} \;,
\\
GH^{\bullet}_{BC}(\g) &:=& \frac{\ker\left(\del\colon\mathcal  U^{\bullet} \to \mathfrak  U^{\bullet+1} \right) \cap \ker\left(\delbar\colon\mathfrak U^{\bullet} \to\mathfrak U^{\bullet-1} \right)}{\imm \left(\del\delbar \colon\mathfrak U^{\bullet} \to\mathfrak U^{\bullet} \right)} \;, \\[5pt]
GH^{\bullet}_{A}(\g) &:=& \frac{\ker \left( \del\delbar \colon \mathfrak U^{\bullet} \to \mathfrak U^{\bullet} \right) }{\imm \left(\del \colon\mathfrak U^{\bullet-1} \to \mathfrak U^{\bullet} \right) + \imm \left( \delbar \colon\mathfrak U^{\bullet+1} \to\mathfrak U^{\bullet} \right) } \;.
\end{eqnarray*}

By the integrability $d\rho=(X+\zeta)\rho$ and from the identification of ${\mathfrak L}^{\ast}= \overline{\mathfrak L}$ by the pairing, we can consider $\langle \rho \rangle$ as a ${\mathfrak L}$-module and
we can identify $({\mathfrak U}^{n-\bullet},\delbar)$ with $\wedge ^{\bullet}{\mathfrak L}^{\ast}\otimes \langle \rho \rangle$ as a cochain complex of 
the Lie algebra ${\mathfrak L}$ with values in the module $\langle \rho \rangle$ (cf. \cite[page 98]{gualtieri-annals}).
In particular, if $d\rho=0$, then we have ${\mathfrak U}^{n-\bullet}\cong\wedge ^{\bullet}{\mathfrak L}^{\ast}$.

\medskip

We consider the following special case for using techniques of  spectral sequences.

\begin{ex}\label{ideal}
Let $\g$ be a Lie algebra and ${\mathfrak h}\subset \g$ an ideal of $\g$.
Consider the differential graded algebra extension 
\[\wedge^{\bullet} \g^{\ast}=\wedge^{\bullet}\left(\g/{\mathfrak h}\right)^{\ast}\otimes \wedge^{\bullet}{\mathfrak h}^{\ast}
\]
dualizing the Lie algebra extension
\[\xymatrix{
0\ar[r]& {\mathfrak h}\ar[r]&\g\ar[r]&\g/{\mathfrak h}\ar[r]&0.
 }
\]
We assume that:
\begin{itemize}
\item $\g/{\mathfrak h}$ admits a complex structure $J$;
\item we have a closed $2$-form $\omega\in \wedge^{2} \g^{\ast}$
yielding $\omega\in \wedge^{2}{\mathfrak h}^{\ast}$ non-degenerate form on $\mathfrak h$.
\end{itemize}
Consider the $\pm i$-eigenspace decomposition
\[\left(\g/{\mathfrak h}\right)\otimes \C=\left(\g/{\mathfrak h}\right)^{1,0}\oplus\left(\g/{\mathfrak h}\right)^{0,1}.
\]
Take a basis $Z_{1},\dots,Z_{k}$ of $\left(\g/{\mathfrak h}\right)^{1,0}$ and the dual basis $\theta_{1},\dots,\theta_{k}$ of $\left(\g/{\mathfrak h}\right)^{\ast 1,0}$.
Then we have the  non-degenerate integrable pure form 
\[\rho=e^{i\omega}\theta_{1}\wedge \dots\wedge \theta_{k}.
\]
We have
\[{\mathfrak L}=\langle \theta_{1},\dots,\theta_{k}, \overline{Z_{1}},\dots,\overline{Z_{k}}\rangle\oplus \{X-i\omega(X): X\in {\mathfrak h}\otimes \C\}
\]
and consider the subspace ${\mathfrak S}= \{X-i\omega(X) \;\vert\; X\in {\mathfrak h}\otimes \C\}$.
By $d\omega=0$ in $\wedge^{\bullet} \g^{\ast}$ and $\omega\in \wedge^{2}{\mathfrak h}^{\ast}$,
${\mathfrak S}$ is an ideal of ${\mathfrak L}$.
We have ${\mathfrak L}/ {\mathfrak S}\cong {\mathfrak L}_{J}= \left(\g/\mathfrak h\right)^{0,1}\oplus \left(\g/\mathfrak h\right)^{\ast 1,0}$.
We have the isomorphism ${\mathfrak h}\otimes \C\ni X\mapsto X-i\omega(X)\in {\mathfrak S}$.

By the Hochschild-Serre spectral sequence,  we have the spectral sequence $\left\{ E^{\bullet,\bullet}_{r} \right\}_{r}$ which converges to $H^{\bullet}({\mathfrak L})$ such that 
\[E_{2}^{p,q}=H^{p}({\mathfrak L}/{\mathfrak S}, H^{q}({\mathfrak S})).
\]

\end{ex}

\section{de Rham and Dolbeault Cohomology of nilmanifolds}

Let $G$ be a connected simply-connected nilpotent Lie group and $\g$ the Lie algebra of $G$.
A $\Q$-structure of $\g$ is a $\Q$-subalgebra $\g_{\Q}\subset \g$ such that $\g_{\Q}\otimes \R=\g$.
It is known that $\g$ admits a $\Q$-structure if and only if $G$ admits a lattice (namely, a cocompact discrete subgroup), see, e.g., \cite{Raghunathan}.
More precisely, considering the exponential map $\exp\colon \g\to G$ which is an diffeomorphism, we can say that: 
\begin{itemize}
\item for a $\Q$-structure $\g_{\Q}\subset \g$, taking a basis $X_{1},\dots, X_{n}$ of $\g_{\Q}$, the group  generated by $\exp (\Z\langle X_{1},\dots, X_{n}\rangle)$ is a lattice in $G$;
\item for a lattice $\Gamma\subset G$, the $\Z$-span of $\exp^{-1}(\Gamma)$ is a $\Q$-structure of $\g$.
\end{itemize}
If $G$ admits a lattice $\Gamma$, we call $\Gamma\backslash G$ a nilmanifold.

\medskip

We suppose that $G$ admits a lattice $\Gamma$ and consider the $\Q$-structure $\g_{\Q}\subset \g$ induced by $\Gamma$ as above.
Let $\mathfrak h\subset \g$ be a subalgebra and $H=\exp(\mathfrak h)$.
We suppose that $\g_{\Q}\cap \mathfrak h$ is a $\Q$-structure of $\mathfrak h$.
Then $H\cap \Gamma$ is a lattice of $H$, see \cite[Remark 2.16]{Raghunathan}.
If $\mathfrak h$ is an ideal, then $H$ is normal and we obtain
the fibre bundle $\Gamma\backslash G\to \Gamma H\backslash G$ with the fibre $\Gamma\cap H\backslash H$.

\medskip

For a nilmanifold $\Gamma\backslash G$, 
regarding the cochain complex $\wedge^{\bullet}\g^\ast$ as the space of left-invariant differential forms on $\Gamma\backslash G$,
we have the inclusion
\[\iota :\wedge^{\bullet}\g^\ast\to \wedge^{\bullet}\Gamma\backslash G.
\]
\begin{thm}[\cite{nomizu}]\label{nomth}
The inclusion $\iota \colon\wedge^{\bullet}\g^\ast\to \wedge^{\bullet}\Gamma\backslash G$ induces a cohomology isomorphism
\[H^{\bullet}(\g)\cong H^{\bullet}(\Gamma\backslash G).
\]
\end{thm}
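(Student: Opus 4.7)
The plan is to proceed by induction on $\dim G$, exploiting the nilpotency of $\g$ through its upper central series. In the base case $\g$ is abelian, so $G\cong \R^n$ and $M:=\Gamma\backslash G$ is a torus; the map $\iota$ identifies $\wedge^\bullet \g^\ast$ with the space of left-invariant forms on $M$, and averaging any closed form with respect to the Haar measure of the torus produces an invariant representative in the same cohomology class, which gives the isomorphism.

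For the inductive step I would use the $\Q$-structure to produce a non-trivial rational central ideal $\mathfrak{z}\subset Z(\g)$ (since $\g_\Q$ is a rational Lie subalgebra, its center $Z(\g_\Q)=Z(\g)\cap \g_\Q$ is a $\Q$-structure of $Z(\g)$, so such a $\mathfrak{z}$ exists). Setting $Z=\exp(\mathfrak{z})$, the rationality ensures that $\Gamma\cap Z$ is cocompact in $Z$ (cf.\ the Raghunathan reference in the paper), and the quotient $M':=\Gamma Z\backslash G$ is a nilmanifold of strictly smaller dimension with Lie algebra $\g':=\g/\mathfrak{z}$. This gives a principal torus bundle
\[
T\;:=\;(\Gamma\cap Z)\backslash Z \;\hookrightarrow\; M \;\twoheadrightarrow\; M' \;.
\]
I would then run the Leray--Serre spectral sequence of this fibration alongside the Hochschild--Serre spectral sequence of the central extension $0\to\mathfrak{z}\to\g\to\g'\to 0$, linked by the morphism of filtered complexes induced by $\iota$. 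Because $\mathfrak{z}$ is central, the holonomy of the fibration acts on $T$ by translations, hence trivially on $H^\bullet(T)$; dually, $\g'$ acts trivially on $\wedge^\bullet\mathfrak{z}^\ast$. Consequently both $E_2$-terms collapse to $H^p(M')\otimes\wedge^q\mathfrak{z}^\ast$ and $H^p(\g')\otimes\wedge^q\mathfrak{z}^\ast$ respectively, and the inductive hypothesis applied to $M'$ makes $\iota$ an isomorphism on $E_2$. The spectral sequence comparison theorem then yields the isomorphism on the abutments.

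The delicate point is checking that the locally constant sheaf $\mathcal{H}^q(T)$ on $M'$ arising in Serre really corresponds, under $\iota$, to the $\g'$-module $H^q(\mathfrak{z})$ appearing in Hochschild--Serre: one has to verify that the identification of fiber cohomology with $\wedge^\bullet \mathfrak{z}^\ast$ (via averaging on each fiber torus) is compatible with the filtrations by horizontal degree on both sides, so that $\iota$ induces a morphism of spectral sequences from $E_1$ onwards. This compatibility is a routine check at the cochain level, using that left-invariant forms on $M$ split as sums of pullbacks from $M'$ wedged with invariant forms on the fibers $T$. Granting this, the induction closes and gives $H^\bullet(\g)\cong H^\bullet(\Gamma\backslash G)$.
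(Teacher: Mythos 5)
The paper offers no proof of this statement to compare against: Theorem \ref{nomth} is quoted as Nomizu's theorem with a bare citation to \cite{nomizu}. Judged on its own, your argument is correct and is essentially the classical proof of that result (Nomizu's induction in its modern spectral-sequence formulation, as in \cite{hattori} or \cite{Raghunathan}). The base case is fine: averaging over the connected torus is chain-homotopic to the identity, so it both preserves cohomology classes and retracts onto the invariant forms, giving injectivity and surjectivity at once. In the inductive step, the existence of a nonzero rational central ideal $\mathfrak z$ is correctly justified (the center of $\g_\Q$ is a $\Q$-structure of $Z(\g)$, and $Z(\g)\neq 0$ by nilpotency); taking $\mathfrak z$ one-dimensional would make the fiber a circle and lighten the bookkeeping, but the torus version works equally well, and centrality guarantees both the triviality of the monodromy on $H^\bullet(T)$ and of the $\g'$-action on $\wedge^q\mathfrak z^\ast$. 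The one point you defer --- that $\iota$ induces a morphism of spectral sequences identifying the local system $\mathcal{H}^q(T)$ with the trivial module $\wedge^q\mathfrak z^\ast$ at $E_2$ --- is indeed routine, since the Hochschild--Serre filtration of $\wedge^\bullet\g^\ast$ by the ideal $\mathfrak z$ is exactly the restriction of the Cartan--Leray filtration by horizontal degree, so $\iota$ is filtered from the start; both filtrations are bounded, so the isomorphism at $E_2$ passes to the abutments by the standard comparison argument. No gaps; just be aware that the paper itself treats this as a black box.
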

Suppose that $\g$ admits a complex structure $J$.
Then we can define the Dolbeault complex $\wedge^{\bullet,\bullet}\g^{\ast}\otimes\C$ of $(\g,J)$.
Consider the left-invariant complex structure on the nilmanifold $\Gamma\backslash G$ induced by $J$
and the Dolbeault complex $\wedge^{\bullet,\bullet}\Gamma\backslash G$.
Then we have the inclusion $\iota\colon \wedge^{\bullet,\bullet}\g^{\ast}\otimes\C\to \wedge^{\bullet,\bullet}\Gamma\backslash G$.

Let ${\mathfrak L}_{J}=\g^{0,1}\oplus \g^{\ast1,0}$.
We consider the Lie algebroid $L_{\Gamma\backslash G}=T^{0,1}\Gamma\backslash G\oplus T^{\ast1,0}\Gamma\backslash G$ for the generalized complex structure associated with the complex structure on $\Gamma\backslash G$.
Then we have $\mathcal{C}^{\infty}(\wedge^{\bullet} L_{\Gamma\backslash G}^{\ast})=\mathcal{C}^{\infty}(\Gamma\backslash G)\otimes \wedge^{\bullet}{\mathfrak L}_{J}^{\ast}$ and we have the inclusion 
\[\kappa\colon \wedge^{\bullet}{\mathfrak L}_{J}^{\ast}\to \mathcal{C}^{\infty}(\wedge^{\bullet} L_{\Gamma\backslash G}^{\ast}) .
\]

\begin{prop}[\cite{kasuya-dg}]\label{tri}
If the inclusion $\iota\colon \wedge^{\bullet,\bullet}\g^{\ast}\otimes\C\to \wedge^{\bullet,\bullet}\Gamma\backslash G$ induces an isomorphism on the Dolbeault cohomology,
then the inclusion 
$\kappa\colon \wedge^{\bullet}{\mathfrak L}_{J}^{\ast}\to \mathcal{C}^{\infty}(\wedge^{\bullet} L_{\Gamma\backslash G}^{\ast})$
induces a cohomology isomorphism.
\end{prop}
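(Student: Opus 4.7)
The plan is to build, both on the invariant subcomplex $\wedge^{\bullet}\mathfrak{L}_{J}^{\ast}$ and on $\mathcal{C}^{\infty}(\wedge^{\bullet}L_{\Gamma\backslash G}^{\ast})$, a compatible Hochschild--Serre spectral sequence whose $E_{1}$-page identifies with Dolbeault cohomology valued in holomorphic polyvector bundles, and then to reduce the resulting comparison at $E_{1}$ to the hypothesized Dolbeault isomorphism by a nilpotent-coefficients argument.

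The Courant bracket relations $[T^{0,1},T^{0,1}]\subset T^{0,1}$, $[T^{0,1},T^{\ast 1,0}]\subset T^{\ast 1,0}$, and $[T^{\ast 1,0},T^{\ast 1,0}]=0$ exhibit $T^{\ast 1,0}\subset L_{\Gamma\backslash G}$ as an abelian ideal with quotient $T^{0,1}$. Under the identification $L_{\Gamma\backslash G}^{\ast}\cong T^{1,0}\oplus T^{\ast 0,1}$ induced by the pairing, the associated Hochschild--Serre decreasing filtration
\[
F^{r}\;:=\;\bigoplus_{j\geq r}\wedge^{\bullet}T^{\ast 0,1}\otimes \wedge^{j}T^{1,0}
\]
is $d_{L}$-stable, restricts to the invariant subcomplex, and is respected by $\kappa$. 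On the associated graded, $d_{0}$ becomes $\delbar$ on $\wedge^{\bullet}T^{\ast 0,1}$ with $\wedge^{j}T^{1,0}$ treated as a coefficient bundle, so
\[
E_{1}^{j,i}\cong H^{0,i}_{\delbar}(\Gamma\backslash G,\,\wedge^{j}T^{1,0}),\qquad E_{1}^{j,i}(\g)\cong H^{i}(\g^{0,1},\,\wedge^{j}\g^{1,0}),
\]
where the holomorphic structure on $\wedge^{j}T^{1,0}$ is the left-invariant one determined by the $\g^{0,1}$-module $\wedge^{j}\g^{1,0}$.

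Since $\g$ is nilpotent, $\ad(\bar X)$ is nilpotent on $\g_{\C}$ for every $\bar X\in \g^{0,1}$; projecting to $\g^{1,0}$ and passing to exterior powers, the induced action of $\g^{0,1}$ on $\wedge^{j}\g^{1,0}$ is nilpotent. Engel's theorem then supplies a filtration $0=V_{0}\subsetneq V_{1}\subsetneq\dots\subsetneq V_{N}=\wedge^{j}\g^{1,0}$ by $\g^{0,1}$-submodules with trivial one-dimensional quotients $V_{k}/V_{k-1}\cong\C$. This descends to a filtration of $\wedge^{j}T^{1,0}(\Gamma\backslash G)$ by left-invariant holomorphic sub-bundles with holomorphically trivial quotient line bundles. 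The short exact sequences $0\to V_{k-1}\to V_{k}\to\C\to 0$ induce commuting ladders of long exact sequences at the Lie-algebra and the nilmanifold levels; the trivial-coefficient case is precisely the hypothesized iso $H^{0,\bullet}_{\delbar}(\g)\cong H^{0,\bullet}_{\delbar}(\Gamma\backslash G)$, and the five lemma propagates the isomorphism up the filtration. Hence $\kappa$ induces an iso on $E_{1}$, and finiteness of the filtration yields an iso on $E_{\infty}$ and so on $H^{\bullet}(L_{\Gamma\backslash G})$.

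The main technical obstacle is verifying that the Engel filtration of the $\g^{0,1}$-module $\wedge^{j}\g^{1,0}$ descends to a filtration by \emph{holomorphic} (not merely $\mathcal{C}^{\infty}$) sub-bundles of $\wedge^{j}T^{1,0}(\Gamma\backslash G)$ whose successive quotients are holomorphically trivial line bundles, together with identifying the holomorphic structure on $\wedge^{j}T^{1,0}$ coming from the Lie algebroid action with the one produced by the $\g^{0,1}$-module structure; the rest is a rather standard spectral sequence bookkeeping.
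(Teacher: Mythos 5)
The paper does not actually prove this proposition itself (it is quoted from \cite{kasuya-dg}), but your argument is correct and is essentially the expected one: the identification of $H^{\bullet}(L_{\Gamma\backslash G})$ with Dolbeault cohomology valued in $\wedge^{j}T^{1,0}$ (which in fact holds as a direct sum rather than merely a spectral sequence, since $T^{\ast 1,0}$ is an abelian ideal and $L_{\Gamma\backslash G}=T^{0,1}\ltimes T^{\ast1,0}$ splits, so $d_{L}$ preserves the $T^{1,0}$-degree), followed by the Engel-filtration/five-lemma reduction to trivial coefficients, is precisely the technique the paper itself deploys to prove the generalization in Proposition \ref{enge}. The two ``technical obstacles'' you flag are immediate for left-invariant data: a $\g^{0,1}$-submodule spans a left-invariant, hence $\delbar$-stable (holomorphic), sub-bundle, and a trivial one-dimensional quotient module yields a line bundle trivialized by a left-invariant $\delbar$-closed section.
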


\medskip

Let $W$ be a complex valued $\g$-module.
We regard $W$ as a $\g^{0,1}$-module and so a ${\mathfrak L}_{J}$-module.
We consider the cochain complex $ \wedge^{\bullet}{\mathfrak L}_{J}^{\ast}\otimes W $ of the Lie algebra with values in the module $W$.
Consider the flat complex vector bundle $\bf W$ over $\Gamma\backslash G$ given by $W$.
We regard $\bf W$ as a holomorphic bundle over {\color{blue}{$\Gamma  \backslash G$}} and so a generalized holomorphic bundle on $\Gamma\backslash G$.
We have  $\mathcal{C}^{\infty}(\wedge^{\bullet} L_{\Gamma\backslash G}^{\ast}\otimes \mathbf{W})=\mathcal{C}^{\infty}(\Gamma\backslash G)\otimes \wedge^{\bullet}{\mathfrak L}_{J}^{\ast}\otimes W$
and we have the inclusion
\[\kappa:\wedge^{\bullet}{\mathfrak L}_{J}^{\ast}\otimes W\to \mathcal{C}^{\infty}(\wedge^{\bullet} L_{\Gamma\backslash G}^{\ast}\otimes \bf W).
\]
\begin{prop}\label{enge}
We suppose that the inclusion $\iota: \wedge^{\bullet,\bullet}\g^{\ast}\otimes\C\to \wedge^{\bullet,\bullet}\Gamma\backslash G$ induces an isomorphism on the Dolbeault cohomology
and $W$ is a nilpotent $\g$-module.
Then the inclusion
\[\kappa:\wedge^{\bullet}{\mathfrak L}_{J}^{\ast}\otimes W\to \mathcal{C}^{\infty}(\wedge^{\bullet} L_{\Gamma\backslash G}^{\ast}\otimes \bf W)
\]
induces a cohomology isomorphism.
\end{prop}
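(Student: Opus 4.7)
The plan is to reduce the statement to the trivial coefficient case $W=\C$, i.e., Proposition \ref{tri}, by induction on the nilpotency length of the $\g$-module $W$. Since $W$ is nilpotent, Engel's theorem provides a filtration $0=W_{0}\subset W_{1}\subset\cdots\subset W_{m}=W$ by $\g$-submodules with each successive quotient isomorphic to the trivial one-dimensional module $\C$. Each $W_{i}$ in turn corresponds to a flat complex sub-bundle $\mathbf{W}_{i}\subset\mathbf{W}$, giving short exact sequences of flat vector bundles $0\to\mathbf{W}_{i-1}\to\mathbf{W}_{i}\to\underline{\C}\to 0$ on $\Gamma\backslash G$.

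For the inductive step, fix $i$ and consider the short exact sequence of $\g$-modules $0\to W_{i-1}\to W_{i}\to\C\to 0$. Tensoring with $\wedge^{\bullet}\mathfrak{L}_{J}^{\ast}$ on the left, and with the Lie algebroid complex on $\Gamma\backslash G$ on the right, produces two short exact sequences of cochain complexes connected by $\kappa$. Passing to long exact sequences in cohomology yields a commutative ladder; combining the outer isomorphisms provided by the inductive hypothesis (for $W_{i-1}$) and by Proposition \ref{tri} (for the trivial quotient $\C$), the five lemma forces $\kappa$ to be an isomorphism for $W_{i}$ as well. Iterating from $i=1$ to $i=m$ concludes the proof.

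The main technical point is to verify that both sequences above are genuinely short exact sequences of complexes, i.e., that the Chevalley-Eilenberg differential on $\wedge^{\bullet}\mathfrak{L}_{J}^{\ast}\otimes W_{i}$ and the Lie algebroid differential on $\mathcal{C}^{\infty}(\wedge^{\bullet} L_{\Gamma\backslash G}^{\ast}\otimes \mathbf{W}_{i})$ preserve the subcomplexes arising from $W_{i-1}\subset W_{i}$. This is immediate from the naturality of both differentials under morphisms of ${\mathfrak L}_{J}$-modules, respectively morphisms of flat bundles: the submodule $W_{i-1}$ is ${\mathfrak L}_{J}$-stable, so the corresponding subcomplex is closed under the differential, and the flat sub-bundle $\mathbf{W}_{i-1}\subset\mathbf{W}_{i}$ is preserved by $d_{L}$. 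Commutativity of the ladder and compatibility of the connecting homomorphisms with $\kappa$ then follow formally. The main obstacle is thus essentially the bookkeeping of this diagram-chase; no new cohomological input beyond Proposition \ref{tri} is required.
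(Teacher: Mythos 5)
Your proposal is correct and takes essentially the same route as the paper: the paper also uses Engel's theorem to produce a trivial one-dimensional quotient, forms the commutative ladder of short exact sequences of complexes $\wedge^{\bullet}{\mathfrak L}_{J}^{\ast}\otimes(-)$ and $\mathcal{C}^{\infty}(\wedge^{\bullet} L_{\Gamma\backslash G}^{\ast}\otimes(-))$, and concludes by the long exact sequences, the five lemma, and induction (on $\dim W$ via a codimension-one submodule rather than along a full Engel flag, which is an immaterial difference).
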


\begin{proof}
The proof is by induction on the dimension of $W$.

Suppose first $\dim W=1$.
Then $W$ is the trivial $\g$-module and hence the statement follows from Proposition \ref{tri}.

In case $\dim W=n>1$, by Engel's theorem, we have a $(n-1)$-dimensional $\g$-submodule $\tilde W \subset W$ such that the quotient $W/\tilde W$ is the trivial submodule.
The exact sequence
\[\xymatrix{
0\ar[r]&\tilde W\ar[r]&W\ar[r]&W/\tilde{W}\ar[r]&0
}
\]
gives the commutative diagram
\[\xymatrix{
0\ar[r]& \wedge^{\bullet}{\mathfrak L}_{J}^{\ast}\otimes \tilde W\ar[r]\ar[d]&\wedge^{\bullet}{\mathfrak L}_{J}^{\ast}\otimes W\ar[r]\ar[d] &\wedge^{\bullet}{\mathfrak L}_{J}^{\ast}\otimes W/\tilde W\ar[r]\ar[d]&0\\
0\ar[r]& \mathcal{C}^{\infty}(\wedge^{\bullet} L_{\Gamma\backslash G}^{\ast}\otimes {\bf \tilde W})\ar[r]& \mathcal{C}^{\infty}(\wedge^{\bullet} L_{\Gamma\backslash G}^{\ast}\otimes {\bf W})\ar[r] & \mathcal{C}^{\infty}(\wedge^{\bullet} L_{\Gamma\backslash G}^{\ast}\otimes {\bf W}/{\bf \tilde W})\ar[r]&0
}
\]
such that the horizontal sequences are exact.
Considering the long exact sequence of cohomologies, by the five lemma, the proposition follows inductively.
\end{proof}

\section{Left-invariant generalized complex structures on nilmanifolds}
Let $G$ be a connected simply-connected nilpotent Lie group and $\g$ the Lie algebra of $G$.
We assume that $G$ admits a lattice $\Gamma$.
We consider the nilmanifold $\Gamma\backslash G$.

We assume that $\g$ admits a generalized complex structure associated with a non-degenerate integrable pure form $\rho\in \wedge ^{\bullet}\g^{\ast}_{\C}$ of type $k$.
Then we have the left-invariant generalized complex structure $\mathcal J$ 
of type $k$ on the nilmanifold $\Gamma\backslash G$.

Consider the  the bi-differential $\Z$-graded complexes $({\mathfrak U}^{\bullet},\del,\delbar)$ associated with $(\g,\rho)$ and $({\mathcal U}^{\bullet},\del,\delbar)$ associated with $(\Gamma\backslash G,\mathcal J)$.
Then  the inclusion $\iota:\wedge ^{\bullet}\g^{\ast}_{\C}\to \wedge^{\bullet} \Gamma\backslash G\otimes \C$ can be considered as a homomorphism $({\mathfrak U}^{\bullet},\del,\delbar)\to ({\mathcal U}^{\bullet},\del,\delbar) $ of bi-differential $\Z$-graded complexes.

\begin{prop}
There exists a homomorphism $\mu\colon ({\mathcal U}^{\bullet},\del,\delbar)\to  ({\mathfrak U}^{\bullet},\del,\delbar)$ such that $\mu \circ \iota={\rm id}$.
Hence the induced map $\iota\colon GH_{\delbar}(\g)\to GH_{\delbar}(\Gamma\backslash G)$ is injective.
\end{prop}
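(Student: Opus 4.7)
The plan is to construct $\mu$ by a standard symmetrization/averaging procedure, exploiting the fact that the generalized complex structure $\mathcal J$ on $\Gamma\backslash G$ is left-invariant (since it arises from $\rho \in \wedge^\bullet\g^\ast_\C$). First I would fix the bi-invariant (since $G$ is nilpotent, hence unimodular) Haar measure $d\nu$ on $G$, normalized so that the induced measure on $\Gamma\backslash G$ has unit total mass. For $\alpha \in \wedge^{\bullet}\Gamma\backslash G\otimes \C$, lift $\alpha$ to a $\Gamma$-invariant form $\tilde\alpha$ on $G$, and define
\[
\mu(\alpha) \;:=\; \int_{\Gamma\backslash G} \left(L_{g^{-1}}\right)^{\ast}\tilde\alpha_{g} \, d\nu(g) \;\in\; \wedge^{\bullet}\g^{\ast}_{\C} \;,
\]
where the integrand, evaluated at the identity, takes values in the fixed finite-dimensional space $\wedge^{\bullet}\g^{\ast}_{\C}$, so the integral is well-defined. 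The map $g\mapsto (L_{g^{-1}})^{\ast}\tilde\alpha_g$ is $\Gamma$-invariant on the left by $\Gamma$-invariance of $\tilde\alpha$, so it descends to $\Gamma\backslash G$ and the integral makes sense.

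Next I would verify the three required properties. Commutation with $d$ follows immediately, since $(L_{g^{-1}})^{\ast}$ is a chain map for each $g$ and integration commutes with $d$. The identity $\mu\circ\iota = \mathrm{id}$ is immediate: if $\alpha$ is left-invariant, then $(L_{g^{-1}})^{\ast}\tilde\alpha_{g} = \tilde\alpha_{e}$ is constant in $g$, and the normalization of $d\nu$ yields $\mu(\iota\alpha) = \alpha$. Finally, I must check that $\mu$ respects the decomposition $\mathcal U^{\bullet}=\bigoplus_{j}\mathcal U^{j}$; this is the place where the left-invariance of $\mathcal J$ is essential. Because $\mathcal J$ is left-invariant, the subbundles $U^{j}\subset \wedge^{\bullet}T^{\ast}(\Gamma\backslash G)\otimes \C$ correspond to left-invariant subbundles of $\wedge^{\bullet}T^{\ast}G\otimes \C$, whose fibers at $e$ are exactly $\mathfrak U^{j}$. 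Thus for $\alpha\in\mathcal U^{j}$ the integrand $(L_{g^{-1}})^{\ast}\tilde\alpha_{g}$ lies in $\mathfrak U^{j}$ for every $g$, and hence so does the integral. Since $\mu$ preserves the bi-grading and commutes with $d=\partial+\delbar$, it commutes with $\partial$ and $\delbar$ separately, so $\mu$ is a morphism of bi-differential $\Z$-graded complexes.

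The injectivity of $\iota^{\ast}\colon GH_{\delbar}(\g)\to GH_{\delbar}(\Gamma\backslash G)$ is then formal: a closed class in $GH_{\delbar}(\g)$ represented by $\alpha\in\mathfrak U^{\bullet}$ which becomes exact in $\mathcal U^{\bullet}$, say $\iota\alpha = \delbar\beta$, yields $\alpha = \mu\iota\alpha = \mu\delbar\beta = \delbar\mu\beta$, i.e.\ $\alpha$ is already $\delbar$-exact in $\mathfrak U^{\bullet}$; the same argument works for $\partial$, $\del\delbar$, and Bott-Chern/Aeppli cohomologies.

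The only mildly delicate point is the last one: making sure the definition of $\mu$ is unambiguous and that left-invariance of $\mathcal J$ really does force $(L_{g^{-1}})^{\ast}$ to carry $\mathcal U^{j}$ fibrewise into $\mathfrak U^{j}$. Everything else is bookkeeping once $\mu$ is in hand.
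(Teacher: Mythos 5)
Your proposal is correct and follows essentially the same route as the paper: the paper defines $\mu$ by the averaging formula $\mu(\alpha)(X_{1},\dots,X_{p})=\int_{\Gamma\backslash G}\alpha(\tilde X_{1},\dots,\tilde X_{p})\,d\nu$ (equivalent to your pullback-by-left-translation integral), checks $d\mu=\mu d$, $\mu\iota=\mathrm{id}$, and $\mu(\mathcal U^{\bullet})\subset\mathfrak U^{\bullet}$, and then deduces compatibility with $\del$ and $\delbar$ from grading-preservation (phrased there via the operator $\mathcal J$ acting as $ip$ on $\mathcal U^{p}$ and the identity $d\mathcal J-\mathcal J d=-i(\del-\delbar)$, which is the same observation you make directly). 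No gaps.
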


\begin{proof}
Let $d\nu$ be a bi-invariant volume form such that $\int_{\Gamma\backslash G}d\nu =1$.
We define the map $\mu\colon \wedge^{\bullet}\Gamma\backslash G\to  \wedge^{\bullet} \g^{\ast}_{\C}$ as follows: for $\alpha\in \wedge^{\bullet}\Gamma\backslash G$, the left-invariant form $\mu(\alpha)$ is defined by
\[\mu(\alpha)(X_{1},\dots ,X_{p})=\int_{\Gamma\backslash G}\alpha(\tilde X_{1},\dots ,\tilde X_{p})d\nu \;,
\]
where $\tilde X_{1},\dots ,\tilde X_{p}$ are vector fields on $\Gamma\backslash G$ induced by $X_{1},\dots X_{p}\in\g$.
Then we have $d\circ \mu =\mu\circ d$ and $\mu \circ \iota ={\rm id}$.
We have $\mu ({\mathcal U}^{\bullet})\subset {\mathfrak U}^{\bullet} $.
We consider $\mathcal J$ as an operator on ${\mathcal U}^{\bullet}$ such that
\[\mathcal J(\alpha)=ip\alpha
\]
for $\alpha\in \mathcal{U}^{p}$.
Then we have $d{\mathcal J}-{\mathcal J}d=-i(\del-\delbar)$, see \cite{cavalcanti-phd, cavalcanti-jgp}.
By $\mu\circ{\mathcal J}={\mathcal J}\circ\mu$, we have $\mu  \circ \del =\del\circ \mu$ and $\mu  \circ \delbar =\delbar\circ \mu$.
Hence the proposition follows.
\end{proof}

\begin{cor}
If the induced map $\iota\colon GH_{\delbar}(\g)\to GH_{\delbar}(\Gamma\backslash G)$ is an isomorphism,
then the induced maps
$\iota\colon GH_{\del}(\g)\to GH_{\del}(\Gamma\backslash G)$ and 
$\iota\colon GH_{BC}(\g)\to GH_{BC}(\Gamma\backslash G)$  are also isomorphisms.
\end{cor}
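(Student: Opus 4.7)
The plan is to exploit the retraction $\mu\colon (\mathcal{U}^{\bullet},\del,\delbar) \to (\mathfrak{U}^{\bullet},\del,\delbar)$ constructed in the previous proposition. Since $\mu\circ\iota=\mathrm{id}$ and $\mu$ commutes with both $\del$ and $\delbar$, the kernel $\ker\mu$ is a sub bi-differential $\Z$-graded complex of $\mathcal{U}^{\bullet}$, and one gets an internal splitting
\[
\mathcal{U}^{\bullet} \;=\; \iota(\mathfrak{U}^{\bullet})\,\oplus\,\ker\mu
\]
as bi-differential complexes. Consequently each of $GH_{\delbar}$, $GH_{\del}$ and $GH_{BC}$ decomposes as a direct sum of the corresponding cohomology of $\mathfrak{U}^{\bullet}$ and of $\ker\mu$, so that the assertion that $\iota$ is an isomorphism on any one of these cohomologies is equivalent to the vanishing of that cohomology on $\ker\mu$. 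The hypothesis yields $GH_{\delbar}(\ker\mu)=0$.

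For the $GH_{\del}$ part, I would use complex conjugation: it sends $\mathcal{U}^{j}$ to $\mathcal{U}^{-j}$ and interchanges $\del$ with $\delbar$, and, since $\mu$ is the $\C$-linear extension of a real averaging map, it commutes with conjugation, so both $\iota(\mathfrak{U}^{\bullet})$ and $\ker\mu$ are conjugation-stable. Thus conjugation induces $\C$-antilinear bijections $GH^{\bullet}_{\delbar}(\ker\mu)\cong GH^{-\bullet}_{\del}(\ker\mu)$, from which the desired vanishing $GH_{\del}(\ker\mu)=0$ follows and the second isomorphism is proved.

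For $GH_{BC}$, the plan is to combine the two vanishings just obtained with $H^{\bullet}_{dR}(\ker\mu)=0$, which follows from the same splitting applied to the usual de Rham complex together with Nomizu's Theorem~\ref{nomth}, and then to run the standard Angella-type double-complex diagram chase (as in \cite{angella-1}) showing that on a bi-differential $\Z$-graded complex the simultaneous vanishing of $H_{\del}$, $H_{\delbar}$ and $H_{dR}$ forces $H_{BC}$ to vanish. The main obstacle I anticipate is checking that this last diagram chase transfers cleanly from the classical bi-graded $(p,q)$ setting to the single-$\Z$-graded generalized-complex setting; however the formal identities $\del^{2}=\delbar^{2}=0$, $\del\delbar+\delbar\del=0$ and $d=\del+\delbar$ hold verbatim on $\mathcal{U}^{\bullet}$, so once the index book-keeping is unwound the classical argument should apply without essential modification.
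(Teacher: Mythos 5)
Your argument is correct, but it takes a genuinely different (and more self-contained) route than the paper for the Bott--Chern part. The paper handles $GH_{\del}$ exactly as you do, by conjugation; it does not pass to $\ker\mu$, but your splitting $\mathcal{U}^{\bullet}=\iota(\mathfrak{U}^{\bullet})\oplus\ker\mu$ is a legitimate equivalent packaging, since $\iota\circ\mu$ is an idempotent commuting with $\del$ and $\delbar$ and all the cohomologies in play are additive over direct sums of bi-differential complexes. The real divergence is at $GH_{BC}$: the paper simply cites \cite[Corollary 1.2]{angella-kasuya-SG}, which is precisely the statement that an inclusion of bounded $\Z$-graded bi-differential complexes inducing isomorphisms on $GH_{\del}$ and $GH_{\delbar}$ induces one on $GH_{BC}$, whereas you propose to re-derive this via the splitting and a vanishing statement on $\ker\mu$. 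The lemma you defer is true, and the obstacle you flag (transferring the bigraded diagram chase to the single $\Z$-grading) is the only delicate point, but it dissolves: along any $\del$/$\delbar$ staircase the $\Z$-degree changes by $\pm1$ monotonically, so boundedness of $\mathcal{U}^{\bullet}$ terminates every zigzag exactly as boundedness of the bidegree does classically. Concretely, for $x\in(\ker\mu)^{j}$ with $\del x=\delbar x=0$, exactness of $\del$ on $\ker\mu$ produces $a_{1},\dots,a_{m}$ with $x=\del a_{1}$, $\delbar a_{i}=\del a_{i+1}$ and $\delbar a_{m}=0$ (the chain stops since $a_{i}\in(\ker\mu)^{j-2i+1}$ and the complex is bounded); exactness of $\delbar$ then lets you climb back, writing $a_{m}=\delbar b_{m}$ and inductively $a_{i}=\delbar b_{i}-\del b_{i+1}$, so that $x=\del a_{1}=\del\delbar b_{1}$ with $b_{1}\in(\ker\mu)^{j}$. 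Note that this uses only $GH_{\del}(\ker\mu)=GH_{\delbar}(\ker\mu)=0$ and boundedness, so your appeal to Nomizu's theorem for $H_{dR}(\ker\mu)=0$, while correct, is superfluous. What your route buys is a proof independent of the external reference which moreover yields $GH_{A}(\g)\cong GH_{A}(\Gamma\backslash G)$ by the symmetric staircase; what the paper's route buys is brevity.
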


\begin{proof}
By using the complex conjugation, we can easily prove that $\iota\colon GH_{\del}(\g)\to GH_{\del}(\Gamma\backslash G)$ is an isomorphism if $\iota: GH_{\delbar}(\g)\to GH_{\delbar}(\Gamma\backslash G)$ is an isomorphism.

 Now, \cite[Corollary 1.2]{angella-kasuya-SG} implies that if
 $\iota\colon GH_{\delbar}(\g)\to GH_{\delbar}(\Gamma\backslash G)$ and $\iota\colon GH_{\del}(\g)\to GH_{\del}(\Gamma\backslash G)$ are isomorphisms, then  
$\iota\colon GH_{BC}(\g)\to GH_{BC}(\Gamma\backslash G)$ is an isomorphism.
\end{proof}

By Theorem \ref{nilcal}, in our settings, we have isomorphisms
$GH^{n-\bullet}_{\delbar}(\g)\cong H^{\bullet}(\mathfrak L)$ and
$GH^{n-\bullet}_{\delbar}(\Gamma\backslash G)\cong H^{\bullet}( L)$.
Thus,  $H^{\bullet}(\mathfrak L) \cong H^{\bullet}( L)$ if and only if $GH^{n-\bullet}_{\delbar}(\g)\cong GH^{n-\bullet}_{\delbar}(\Gamma\backslash G)$.

\medskip

Let $G$ be a connected simply-connected nilpotent Lie group and $\g$ the Lie algebra of $G$.
We suppose that $G$ admits a lattice $\Gamma$ and consider the $\Q$-structure $\g_{\Q}\subset \g$ induced by $\Gamma$.
We assume  that there exists an ideal  $\mathfrak h\subset \g$ so that:
\begin{enumerate}
\item $\g_{\Q}\cap \mathfrak h$ is a $\Q$-structure of $\mathfrak h$;
\item $\g/{\mathfrak h}$ admits a complex structure $J$;
\item we have a closed $2$-form $\omega\in \wedge^{2} \g^{\ast}$
yielding $\omega\in \wedge^{2}{\mathfrak h}^{\ast}$ non-degenerate form on $\mathfrak h$.
\end{enumerate}
Then, as in Example \ref{ideal}, we obtain the non-degenerate integrable pure form $\rho\in \wedge ^{\bullet}\g^{\ast}_{\C}$
and the Lie algebra ${\mathfrak L}$ and its ideal $\mathfrak S$.
We  obtain the symplectic fibre bundle $\Gamma\backslash G\to \Gamma H\backslash G$ over the complex base $\Gamma H\backslash G$ with the symplectic fibre $\Gamma\cap H\backslash H$ as in Section \ref{fisp}.
The left-invariant generalized complex structure given by $\rho$ is the generalized complex structure constructed in  Section \ref{fisp}.
Consider the Lie algebroids $L$ and $S$ as in   Section \ref{fisp}.
Then ${\mathfrak L}$ and ${\mathfrak S}$ give the global frame of $L$ and $S$ respectively.

Consider the cochain complex $\wedge^{\bullet}{\mathcal L}^{\ast}$ and $\mathcal{C}^{\infty}(\wedge^{\bullet} L^{\ast})$.
Then we have $\mathcal{C}^{\infty}(\wedge^{\bullet} L^{\ast})=\mathcal{C}^{\infty}(\Gamma\backslash G)\otimes \wedge^{\bullet}{\mathcal L}^{\ast}$ and we have the inclusion
\[\wedge^{\bullet}{\mathcal L}^{\ast}\to \mathcal{C}^{\infty}(\wedge^{\bullet} L^{\ast}).
\]
For the ideal ${\mathfrak S}$, we consider the filtration
\[F^{p}\wedge^{p+q}{\mathfrak L}^{\ast}=
\left\{\phi\in \wedge^{p+q}{\mathfrak L}^{\ast} \;\middle\vert\; \omega(X_{1},\dots, X_{p+q})=0 \text{ for } X_{\ell_{1}},\dots, X_{\ell_{q+1}}\in {\mathfrak S}\right\}.
\]
This filtration gives the spectral sequence $\left\{ \,^{\prime}E_{r}^{\bullet,\bullet} \right\}_{r}$ which converges to $H^{\bullet}({\mathfrak L})$ such that 
\[\,^{\prime}E_{2}^{p,q}=H^{p}({\mathfrak L}/{\mathfrak S},H^{q}({\mathfrak S})).
\]
By the identifications ${\mathfrak L}/ {\mathfrak S}\cong {\mathfrak L}_{J}$ and ${\mathfrak S}=\mathfrak h\otimes \C$,
we have
\[\,^{\prime}E_{2}^{p,q}=H^{p}({\mathfrak L}_{J},H^{q}(\mathfrak h\otimes \C)).
\]
The filtration $F^{p}\wedge^{\bullet}{\mathfrak L}^{\ast}$ can be extended to the filtration of $ \mathcal{C}^{\infty}(\wedge^{\bullet} L^{\ast})$ constructed in  Section \ref{fisp}.
Hence the inclusion $\wedge^{\bullet}{\mathcal L}^{\ast}\to \mathcal{C}^{\infty}(\wedge^{\bullet} L^{\ast})$
induces the spectral sequence homomorphism $\,^{\prime}E_{\bullet}^{\bullet,\bullet}\to E_{\bullet}^{\bullet,\bullet}$ such that the homomorphism $\,^{\prime}E_{2}^{\bullet,\bullet}\to E_{2}^{\bullet,\bullet}$ is identified with the map
\[H^{p}({\mathfrak L}_{J},H^{q}(\mathfrak h\otimes \C))\to H^{p}(L_{ \Gamma H\backslash G}, {\bf H}^{q}(\Gamma\cap H\backslash H)).
\]
By  Theorem \ref{nomth},
the flat bundle ${\bf H}^{q}(\Gamma\cap H\backslash H)$
over $\Gamma H\backslash G$ is derived from the $\g/\mathfrak h$-module $H^{q}(\mathfrak h\otimes \C)$.
The $\g/\mathfrak h$-module $H^{q}(\mathfrak h\otimes \C)$ being induced by the adjoint representation on the nilpotent Lie algebra $\g$,
it is a nilpotent $\g/\mathfrak h$-module.
If $\iota: \wedge^{\bullet,\bullet}(\g/\mathfrak h)^{\ast}\otimes\C\to \wedge^{\bullet,\bullet}\Gamma H\backslash G$ induces an isomorphism on the Dolbeault cohomology,
then  the homomorphism $\,^{\prime}E_{2}^{\bullet,\bullet}\to E_{2}^{\bullet,\bullet}$ is an isomorphism.

Hence, by Proposition \ref{enge}, we obtain the following result.

\begin{thm}\label{lieiso}
Let $G$ be a connected simply-connected nilpotent Lie group and $\g$ the Lie algebra of $G$.
We suppose that $G$ admits a lattice $\Gamma$ and consider the $\Q$-structure $\g_{\Q}\subset \g$ induced by $\Gamma$.
We assume  that there exists an ideal  $\mathfrak h\subset \g$ so that:
\begin{enumerate}
\item $\g_{\Q}\cap \mathfrak h$ is a $\Q$-structure of $\mathfrak h$;
\item $\g/{\mathfrak h}$ admits a complex structure $J$;
\item we have a closed $2$-form $\omega\in \wedge^{2} \g^{\ast}$
yielding $\omega\in \wedge^{2}{\mathfrak h}^{\ast}$ non-degenerate form on $\mathfrak h$;
\item $\iota\colon \wedge^{\bullet,\bullet}(\g/\mathfrak h)^{\ast}\otimes\C\to \wedge^{\bullet,\bullet}\Gamma H\backslash G$ induces an isomorphism on the Dolbeault cohomology (e.g., $J$ is bi-invariant, Abelian, or rational, i.e., $J(\g_{\Q}/{\mathfrak h}\cap \g_{\Q})\subset \g_{\Q}/{\mathfrak h}\cap \g_{\Q}$).
\end{enumerate}
Then the inclusion \[\wedge^{\bullet}{\mathcal L}^{\ast}\to \mathcal{C}^{\infty}(\wedge^{\bullet} L^{\ast})
\]
induces a cohomology isomorphism.
\end{thm}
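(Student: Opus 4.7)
The plan is to compare two spectral sequences abutting to $H^{\bullet}(\mathfrak{L})$ and $H^{\bullet}(L)$ respectively, and to reduce the theorem to the two main technical inputs already established: Theorem \ref{spect} (the Leray-type spectral sequence for symplectic fibrations) and Proposition \ref{enge} (the Nomizu-type invariance for nilpotent $\g$-modules). The geometric setup is already given: Example \ref{ideal} equips $\mathfrak{L}$ with the ideal $\mathfrak{S}\cong \mathfrak{h}\otimes \C$ whose quotient $\mathfrak{L}/\mathfrak{S}\cong \mathfrak{L}_J$, and the discussion preceding the theorem identifies the symplectic fibre bundle $\Gamma\backslash G\to \Gamma H\backslash G$ with fibre $\Gamma\cap H\backslash H$ as precisely the geometric setup of Section \ref{fisp}. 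Crucially, $\mathfrak{L}$ and $\mathfrak{S}$ provide global left-invariant frames of $L$ and $S$, so the Hochschild-Serre filtration $F^{p}\wedge^{\bullet}\mathfrak{L}^{\ast}$ at the Lie algebra level is the restriction of the filtration $F^p\mathcal{C}^{\infty}(\wedge^{\bullet}L^*)$ constructed in Section \ref{fisp}, and the inclusion $\wedge^{\bullet}\mathfrak{L}^*\to \mathcal{C}^{\infty}(\wedge^{\bullet}L^*)$ induces a morphism of the associated spectral sequences.

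I then compare the $E_2$ pages. By Hochschild-Serre applied to $\mathfrak{S}\triangleleft \mathfrak{L}$ I obtain $\,'E_{2}^{p,q}\cong H^{p}(\mathfrak{L}_J, H^{q}(\mathfrak{h}\otimes\C))$, while Theorem \ref{spect} (with $W=K$) gives $E_{2}^{p,q}\cong H^{p}(L_{\Gamma H\backslash G}, \mathbf{H}^{q}(\Gamma\cap H\backslash H))$ (up to the twist by $K_B$ which pulls back correctly). Nomizu's theorem (Theorem \ref{nomth}) applied fibrewise identifies the flat bundle $\mathbf{H}^{q}(\Gamma\cap H\backslash H)$ with the one induced from the $\g/\mathfrak{h}$-module $H^{q}(\mathfrak{h}\otimes \C)$. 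Since $\g$ is nilpotent, the adjoint action on $\wedge^{\bullet}\mathfrak{h}^*\otimes\C$ is nilpotent, and therefore the induced $\g/\mathfrak{h}$-action on $H^{q}(\mathfrak{h}\otimes\C)$ is a nilpotent module (this is a standard consequence of Engel's theorem).

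Now I invoke Proposition \ref{enge} with $W=H^{q}(\mathfrak{h}\otimes \C)$: the hypothesis (iv) (Dolbeault invariance for $\g/\mathfrak{h}$) together with the nilpotency established above yields that the morphism $\,'E_{2}^{p,q}\to E_{2}^{p,q}$ is an isomorphism for all $p,q$. Since both filtrations are bounded, the classical mapping theorem for spectral sequences propagates this isomorphism to $E_{\infty}$, and hence to the abutments, giving $H^{\bullet}(\mathfrak{L})\cong H^{\bullet}(L)$. In view of Theorem \ref{nilcal}, this is equivalent to $GH^{n-\bullet}_{\delbar}(\g)\cong GH^{n-\bullet}_{\delbar}(\Gamma\backslash G)$, which is the cohomological isomorphism induced by $\wedge^{\bullet}\mathcal{L}^{\ast}\to \mathcal{C}^{\infty}(\wedge^{\bullet}L^{\ast})$.

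The main obstacle is the compatibility check: one must verify rigorously that the filtration from Section \ref{fisp}, built over the base $B=\Gamma H\backslash G$ using \emph{local} holomorphic trivializations and the sub-Lie algebroid $S$, restricts on the left-invariant subcomplex $\wedge^{\bullet}\mathfrak{L}^*$ to the Hochschild-Serre filtration of $\mathfrak{S}\triangleleft \mathfrak{L}$. This requires tracking how the isomorphism $\mathfrak{h}\otimes\C\to \mathfrak{S}$, $X\mapsto X-i\omega(X)$, matches the pointwise isomorphism $\mathcal{F}\to S$ used to identify $E_1^{p,q}$ in Section \ref{fisp}, and then identifying $\,'E_1^{p,q}$ with the invariant forms on the appropriate twisted complex. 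Once this bookkeeping is performed, everything else is an application of the preceding results.
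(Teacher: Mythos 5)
Your proposal follows essentially the same route as the paper: the paper likewise compares the Hochschild--Serre spectral sequence of $\mathfrak S\triangleleft\mathfrak L$ with the Leray-type spectral sequence of Section \ref{fisp}, identifies the map on $E_2$ with $H^{p}({\mathfrak L}_{J},H^{q}(\mathfrak h\otimes \C))\to H^{p}(L_{\Gamma H\backslash G},{\bf H}^{q}(\Gamma\cap H\backslash H))$ via Nomizu's theorem, notes the nilpotency of the module $H^{q}(\mathfrak h\otimes\C)$, and concludes by Proposition \ref{enge}. The filtration-compatibility point you flag is exactly the step the paper asserts (without further detail) when it says the filtration $F^{p}\wedge^{\bullet}{\mathfrak L}^{\ast}$ extends to the filtration of $\mathcal{C}^{\infty}(\wedge^{\bullet}L^{\ast})$, so your treatment is, if anything, more careful on that point.
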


\begin{cor}\label{cor:lieiso}
In the same assumptions of Theorem \ref{lieiso},
the inclusion $\iota\colon({\mathfrak U}^{\bullet},\del,\delbar)\to ({\mathcal U}^{\bullet},\del,\delbar) $ induces isomorphisms
$GH_{\delbar}(\g)\cong GH_{\delbar}(\Gamma\backslash G)$, and
$GH_{\del}(\g)\cong GH_{\del}(\Gamma\backslash G)$, and 
$GH_{BC}(\g)\cong GH_{BC}(\Gamma\backslash G)$.
\end{cor}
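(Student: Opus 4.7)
The plan is to reduce the statement to Theorem \ref{lieiso} and to the preceding corollary concerning the passage from $GH_{\delbar}$ to $GH_{\del}$ and $GH_{BC}$. The key point is that, under the assumptions, Theorem \ref{nilcal} guarantees that the non-degenerate integrable pure form $\rho$ associated with the data $(\mathfrak h,J,\omega)$ (as constructed in Example \ref{ideal}) is closed, so that the situation is generalized Calabi--Yau at the Lie-algebra level. This supplies canonical identifications
\[
({\mathfrak U}^{n-\bullet},\delbar) \;\cong\; (\wedge^{\bullet}{\mathfrak L}^{\ast},d_{\mathfrak L})
\qquad\text{and}\qquad
({\mathcal U}^{n-\bullet},\delbar) \;\cong\; (\mathcal{C}^{\infty}(\wedge^{\bullet}L^{\ast}),d_{L})\;,
\]
as recalled in the paragraph immediately before this section and consistently with the discussion after Proposition \ref{tri}. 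Under these identifications, the inclusion $\iota$ corresponds precisely to the inclusion $\wedge^{\bullet}{\mathfrak L}^{\ast}\hookrightarrow \mathcal{C}^{\infty}(\wedge^{\bullet}L^{\ast})$ appearing in Theorem \ref{lieiso}.

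First I would invoke Theorem \ref{lieiso}, whose hypotheses are exactly those of the corollary, to conclude that the inclusion induces an isomorphism on the Lie algebroid cohomologies $H^{\bullet}({\mathfrak L}) \cong H^{\bullet}(L)$. Transporting this through the identifications above yields the isomorphism $\iota^{\ast}\colon GH_{\delbar}(\g) \xrightarrow{\sim} GH_{\delbar}(\Gamma\backslash G)$. From this point the $\del$-isomorphism and the Bott--Chern isomorphism are a direct application of the corollary proved just above the statement (the one following the proposition about the map $\mu$): complex conjugation promotes the $\delbar$-isomorphism to a $\del$-isomorphism, and then \cite[Corollary 1.2]{angella-kasuya-SG} applied to the morphism of bi-differential $\Z$-graded complexes $\iota\colon ({\mathfrak U}^{\bullet},\del,\delbar)\to ({\mathcal U}^{\bullet},\del,\delbar)$ upgrades the simultaneous $\del$- and $\delbar$-isomorphisms to a Bott--Chern isomorphism.

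The only point requiring a small verification is the compatibility of the identification ${\mathfrak U}^{n-\bullet}\cong\wedge^{\bullet}{\mathfrak L}^{\ast}$ (and its manifold analogue) with the inclusion $\iota$; this follows from the explicit description $\rho = e^{i\omega}\theta_{1}\wedge\cdots\wedge\theta_{k}$ in Example \ref{ideal}, which is a left-invariant form on $\Gamma\backslash G$ whose contraction with the left-invariant frame spanning $\bar{\mathfrak L}$ exactly yields the left-invariant frame of $\bar L$. I do not expect genuine obstacles here: the substantive content has already been isolated in Theorem \ref{lieiso} (cohomology of $L$) and in the corollary above (passage from $\delbar$ to $\del$ and $BC$). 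The main bookkeeping is the matching of filtrations and identifications, which is routine once the generalized Calabi--Yau identification at both the invariant and the global level is in place.
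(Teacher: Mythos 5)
Your proposal is correct and follows essentially the same route as the paper: Theorem \ref{nilcal} makes $\rho$ closed so that $GH^{n-\bullet}_{\delbar}\cong H^{\bullet}(\mathfrak L)$ and $GH^{n-\bullet}_{\delbar}(\Gamma\backslash G)\cong H^{\bullet}(L)$ (as stated in the paragraph preceding Theorem \ref{lieiso}), Theorem \ref{lieiso} then gives the $\delbar$-isomorphism, and the unnamed corollary following the averaging-map proposition (conjugation plus \cite[Corollary 1.2]{angella-kasuya-SG}) upgrades it to $\del$ and $BC$. Your added remark on checking compatibility of the Calabi--Yau identifications with $\iota$ is a sensible, if routine, point that the paper leaves implicit.
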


\section{Deformation and cohomology}
We consider a nilmanifold $\Gamma\backslash G$ with a left-invariant generalized complex structure $\mathcal J$.
We consider the Lie algebra $\mathfrak L\subset (\g\oplus \g^{\ast})\otimes \C$ and the cochain complex $\wedge^{\bullet}\mathfrak L^{\ast}$.
By the identification $ \overline{\mathfrak L}=\mathfrak L^{\ast}$,
we have the bracket on  $\mathfrak L^{\ast}$.
Consider the Schouten bracket on $\wedge^{\bullet}\mathfrak L^{\ast}$.
Then, for the inclusion $\wedge^{\bullet}{\mathcal L}^{\ast}\subset \mathcal{C}^{\infty}(\wedge^{\bullet} L^{\ast})$, the Schouten bracket on $\wedge^{\bullet}\mathfrak L^{\ast}$ can be extended to the Schouten bracket on $\mathcal{C}^{\infty}(\wedge^{\bullet} L^{\ast})$.

We assume that we have a smooth family $\epsilon(t)\in \wedge^{2} L^{\ast}$ which satisfy
the Maurer-Cartan equation
\[d_{\mathfrak L}\epsilon+\frac{1}{2}[\epsilon,\epsilon]=0
\]
such that $\epsilon(0)=0$.
Then we have deformations $\mathcal J_{\epsilon(t)}$ of $\mathcal J$.

\begin{thm}\label{thm:coh-def}
Let $\Gamma\backslash G$ be a nilmanifold with a left-invariant generalized complex structure $\mathcal J$; denote by $\g$ be the Lie algebra of $G$.
If the isomorphism $GH_{\delbar}(\g)\cong GH_{\delbar}(\Gamma\backslash G)$ holds on the original generalized complex structure $\mathcal J$,
then the same isomorphism holds on the deformed generalized complex structure $\mathcal J_{\epsilon(t)}$
for sufficiently small $t$.
\end{thm}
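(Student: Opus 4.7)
The plan is to combine Hodge theory on the elliptic generalized Dolbeault complex of $(\Gamma\backslash G, \mathcal{J}_{\epsilon(t)})$ with the classical upper semi-continuity of $t\mapsto\dim\ker\Delta_t$ for a smooth family of self-adjoint elliptic operators. The key simplification is that, since $\epsilon(t)\in\wedge^{2}\mathfrak{L}^{\ast}$ is left-invariant, the deformed structure $\mathcal{J}_{\epsilon(t)}$ is again left-invariant, so one can pick a smoothly varying family of left-invariant generalized Hermitian metrics $\mathcal{G}_t$ adapted to $\mathcal{J}_{\epsilon(t)}$ and, using the identification $L_{\epsilon(t)}\cong L$ induced by $(1+\epsilon(t))^{-1}$, realise all the deformed data on a common ambient complex.

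With this set-up in place, let $\Delta_t=\delbar_t\delbar_t^{\ast}+\delbar_t^{\ast}\delbar_t$ be the associated family of generalized Dolbeault Laplacians; each $\Delta_t$ is elliptic, and Hodge theory yields $\ker\Delta_t\cong GH^{\bullet}_{\delbar}(\Gamma\backslash G,\mathcal{J}_{\epsilon(t)})$. Because $\mathcal{G}_t$ and $\mathcal{J}_{\epsilon(t)}$ are both left-invariant, $\Delta_t$ commutes with the averaging map $\mu$ of the previous section, and therefore preserves the $L^{2}$-orthogonal decomposition $\mathcal{U}^{\bullet}=\iota(\mathfrak{U}^{\bullet})\oplus\ker\mu$. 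Writing $\delta_t=\Delta_t|_{\mathfrak{U}^{\bullet}}$ for its restriction to the finite-dimensional invariant subcomplex, Hodge theory there gives $\ker\delta_t\cong GH^{\bullet}_{\delbar}(\g,\mathcal{J}_{\epsilon(t)})$, and
\[
\ker\Delta_t \;=\; \iota(\ker\delta_t) \,\oplus\, \bigl(\ker\Delta_t\cap\ker\mu\bigr).
\]

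The hypothesis at $t=0$, together with the previously established injectivity of $\iota^{\ast}$, gives $\ker\Delta_{0}=\iota(\ker\delta_{0})$, hence $\ker\Delta_{0}\cap\ker\mu=0$. The restriction $\Delta_t|_{\ker\mu}$ is a smooth family of self-adjoint elliptic operators on the fixed closed $\Delta_t$-invariant subspace $\ker\mu$, so Kodaira--Spencer upper semi-continuity forces $\dim(\ker\Delta_t\cap\ker\mu)\leq\dim(\ker\Delta_{0}\cap\ker\mu)=0$ for $t$ in a neighbourhood of $0$. Consequently $\ker\Delta_t=\iota(\ker\delta_t)$, and combining this with the injectivity of $\iota^{\ast}$ yields that $\iota^{\ast}\colon GH_{\delbar}(\g,\mathcal{J}_{\epsilon(t)})\to GH_{\delbar}(\Gamma\backslash G,\mathcal{J}_{\epsilon(t)})$ is an isomorphism, as required. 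The main technical obstacle is the first step: one has to verify that, after transporting everything via $(1+\epsilon(t))^{-1}$ to the fixed complex on $\wedge^{\bullet}L^{\ast}$, the induced family $\Delta_t$ depends smoothly on $t$ and restricts to a smooth family of elliptic operators on $\ker\mu$, so that semi-continuity of kernel dimensions can be invoked on a fixed Hilbert space in the form that the argument needs.
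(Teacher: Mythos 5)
Your proof is correct and follows essentially the same route as the paper: a smooth family of left-invariant generalized Hermitian metrics gives a smooth family of elliptic Laplacians preserving the left-invariant subcomplex, the $t=0$ hypothesis together with uniqueness of harmonic representatives forces $\ker\Delta_{0}$ into the invariant forms, and Kodaira--Spencer stability propagates this to small $t$, after which injectivity of $\iota$ finishes the argument. The only cosmetic difference is that the paper phrases the stability step as $(\ker\Delta_{\bar\partial}(0))^{\perp}\cap\ker\Delta_{\bar\partial}(t)=0$ on the fixed space $W^{0}(\wedge^{\bullet}\Gamma\backslash G\otimes\C)$, rather than as upper semi-continuity of $\dim\bigl(\ker\Delta_{t}\cap\ker\mu\bigr)$ for the family restricted to $\ker\mu$, thereby avoiding the technical point you flag about restricting the elliptic family to that subspace.
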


\begin{proof}
Take a smooth family of generalized Hermitian metrics for the
generalized complex structures $\mathcal J_{\epsilon(t)}$.
We obtain the smooth family $\Delta_{\bar\partial}(t)$ of elliptic operators on $\wedge^{\bullet} \Gamma\backslash G\otimes\C $ such that $\Delta_{\bar\partial}(t)(\wedge^{\bullet}\g^{\ast}\otimes \C)\subset \wedge^{\bullet}\g^{\ast}\otimes \C$.

Take a Hermitian metric on $\g\otimes \C$ and extend it to $T\Gamma\backslash G\otimes\C$.
Consider the completion $W^{0}(\wedge^{\bullet} \Gamma\backslash G\otimes\C)$ with respect to the $L^{2}$-norm.
Consider the orthogonal complement $(\ker \Delta_{\bar\partial}(t))^{\perp}$ in $W^{0}(\wedge^{\bullet} \Gamma\backslash G\otimes\C)$.
It is known that for  sufficiently small $t$,
we have $(\ker \Delta_{\bar\partial}(0))^{\perp}\cap \ker \Delta_{\bar\partial}(t)=0$.

We can easily show that any cohomology class in $GH_{\delbar}(\g)$ admits a  unique representative in $\ker \Delta_{\bar\partial}(0)$.
Hence, by the isomorphism $GH_{\delbar}(\g)\cong GH_{\delbar}(\Gamma\backslash G)$, we have $\ker \Delta_{\bar\partial}(0)\subset \wedge^{\bullet}\g^{\ast}\otimes \C$.
This implies that $(\wedge^{\bullet}\g^{\ast}\otimes \C)^{\perp}\subset (\ker \Delta_{\bar\partial}(0))^{\perp}$.
By $(\ker \Delta_{\bar\partial}(0))^{\perp}\cap \ker \Delta_{\bar\partial}(t)=0$, we have $\ker \Delta_{\bar\partial}(t)\subset \wedge^{\bullet}\g^{\ast}\otimes \C$.
Hence, on the deformed generalized complex structure  $\mathcal J_{\epsilon(t)}$, 
 any cohomology class in $GH_{\delbar}(\Gamma\backslash G)$ admits a   representative in $\wedge^{\bullet}\g^{\ast}\otimes \C$ and the theorem follows.
\end{proof}

\begin{thm}\label{thm:def-inv}
Let $\Gamma\backslash G$ be a nilmanifold with a left-invariant generalized complex structure $\mathcal J$; denote by $\g$ be the Lie algebra of $G$.
If the isomorphism $GH_{\delbar}(\g)\cong GH_{\delbar}(\Gamma\backslash G)$ holds on the original generalized complex structure $\mathcal J$,
then any sufficiently small deformation of generalized complex structure is equivalent to a left-invariant complex structure $\mathcal J_{\epsilon}$ with $\epsilon \in \wedge^{2} \mathfrak L^{\ast}$ satisfying the Maurer-Cartan equation.
\end{thm}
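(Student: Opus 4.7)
The plan is to combine Kuranishi's theorem (Theorem \ref{defku}) with the harmonic-form argument already used in the proof of Theorem \ref{thm:coh-def}, and then to observe that the Kuranishi recursion preserves left-invariance once the initial datum is left-invariant.

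First, I choose a left-invariant Hermitian metric on $L$, i.e.\ one induced by a Hermitian inner product on the finite-dimensional Lie algebra $\mathfrak L$. With such a choice, the operators $d_L$, $d_L^{\ast}$ and hence $\Delta_L$ all preserve the subspace $\wedge^{\bullet} \mathfrak L^{\ast} \subset \mathcal C^{\infty}(\wedge^{\bullet} L^{\ast})$ of left-invariant forms. Consequently the Green operator $G$ also preserves $\wedge^{\bullet}\mathfrak L^{\ast}$, since it is defined by $G\Delta_L + H = \mathrm{id}$ and the projection $H$ onto $\ker \Delta_L$ commutes with the decomposition.

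Next, exactly as in the proof of Theorem \ref{thm:coh-def}, the hypothesis $GH_{\delbar}(\g) \cong GH_{\delbar}(\Gamma\backslash G)$ forces every $\Delta_L$-harmonic form on $L$ to be left-invariant: each cohomology class in $GH_{\delbar}(\Gamma\backslash G) \cong H^{\bullet}(L)$ has a unique harmonic representative, and by the isomorphism it admits a left-invariant representative; hence $\ker \Delta_L \subset \wedge^{\bullet}\mathfrak L^{\ast}$. In particular, the class $\epsilon_1$ provided by Theorem \ref{defku} lies in $\wedge^{2}\mathfrak L^{\ast}$.

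Now I run the Kuranishi recursion on $\epsilon_1$. Since $\wedge^{\bullet}\mathfrak L^{\ast}$ is closed under the Schouten bracket (the bracket of left-invariant multivectors/forms is left-invariant) and closed under $d_L^{\ast}$ and $G$, an easy induction on $r$ using
\[
\epsilon_{r}(\epsilon_{1}) \;=\; \frac{1}{2}\sum_{s=1}^{r-1} d_L^{\ast} G [\epsilon_{s}(\epsilon_{1}), \epsilon_{r-s}(\epsilon_{1})]
\]
shows that every $\epsilon_r(\epsilon_1) \in \wedge^{2}\mathfrak L^{\ast}$. Convergence of the formal series for small $\epsilon_1$ is guaranteed by Theorem \ref{defku}, and as $\wedge^{2}\mathfrak L^{\ast}$ is finite-dimensional and closed in $\mathcal C^{\infty}(\wedge^{2} L^{\ast})$, the limit $\epsilon = \epsilon(\epsilon_1)$ again belongs to $\wedge^{2}\mathfrak L^{\ast}$. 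By Theorem \ref{defku} it satisfies the Maurer--Cartan equation, so the associated deformed structure $\mathcal J_{\epsilon}$ is left-invariant, proving the theorem.

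The main obstacle in this strategy is the first point, namely confirming that harmonic forms for $\Delta_L$ must be left-invariant. This is not automatic from the cohomology isomorphism alone; it requires the argument already isolated in the proof of Theorem \ref{thm:coh-def}, comparing the orthogonal decomposition of $W^0(\wedge^{\bullet}\Gamma\backslash G \otimes \C)$ against the finite-dimensional subspace $\wedge^{\bullet}\g^{\ast}\otimes\C$, and using that the restriction of $\Delta_L$ to the invariant subcomplex has the same cohomology as the full complex. Once this is established, the remainder of the proof is a formal verification that the Kuranishi recursion stays inside the left-invariant subspace.
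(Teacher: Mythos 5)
Your proposal is correct and follows essentially the same route as the paper's own proof: both choose a left-invariant Hermitian metric on $L$ coming from $\mathfrak L$, use the cohomology isomorphism together with finite-dimensional Hodge theory on $\wedge^{\bullet}\mathfrak L^{\ast}$ to conclude $\ker\Delta_{L}\subset\wedge^{\bullet}\mathfrak L^{\ast}$, and then observe that the Kuranishi recursion of Theorem \ref{defku} stays in the left-invariant subcomplex. Your explicit induction on $\epsilon_{r}(\epsilon_{1})$ and your remark about the restricted Laplacian merely spell out steps the paper leaves implicit.
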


\begin{proof}
By the isomorphism $GH_{\delbar}(\g)\cong GH_{\delbar}(\Gamma\backslash G)$, we have the isomorphism
$H^{\ast}(\mathfrak L) \cong H^{\ast}( L)$.

Take a Hermitian metric on $\mathfrak L$.
Since $\mathfrak L$ gives the global frame of $L$, it gives a Hermitian metric on $L$.
Consider the adjoint operator $d_{L}^{\ast}$, the Laplacian operator $\Delta_{L}=d_{L}d_{L}^{\ast}+d_{L}^{\ast}d_{L}$, the projection $H: \mathcal{C}^{\infty}(\wedge^{\bullet} L^{\ast})\to \ker \Delta_{L}$ and the Green operator $G$.
Obviously, these operators can be extended to $\wedge^{\bullet} \mathfrak L^{\ast}$.
Since $\wedge^{\bullet} \mathfrak L^{\ast}$ is finite dimensional, we can easily prove that any cohomology class in $H^{\ast}(\mathfrak L)$ admits a unique representative in $\ker \Delta_{L}$.
Hence, combining with the Hodge theory on the elliptic complex $ (\mathcal{C}^{\infty}(\wedge^{\bullet} L^{\ast}),d_{L})$,
we have $\ker \Delta_{L}\subset \wedge^{\bullet} \mathfrak L^{\ast}$.
Hence,
for $\epsilon_{1}\in \ker \Delta_{L} $, the 
formal power series $\epsilon(\epsilon_{1})$ as in Theorem \ref{defku} is valued in $\wedge^{\bullet} \mathfrak L^{\ast}$.
Thus the theorem follows from Theorem \ref{defku}.
\end{proof}

\section{Example: the Kodaira-Thurston manifold}\label{sec:kt}

We consider the  real  Heisenberg group $H_{3}(\R)$ which is the group of matrices of the form
\[\left(
\begin{array}{ccc}
1& x&z  \\
0&    1&y\\
0&0&1  
\end{array}
\right)
\]
where  $x,y, z\in \R$.
Then $H_{3}(\R)$ admits the lattice $H_{3}(\Z)=GL_{3}(\Z)\cap H_{3}(\R) $.
We consider the Lie group $H_{3}(\R)\times \R$ with the lattice $H_{3}(\Z)\times \Z$.

Let $\g=\langle X_{1}, X_{2},X_{3},X_{4}\rangle$ such that $[X_{1},X_{2}]=X_{3}$ and other brackets are $0$.
Then $\g$ is the Lie algebra of $H_{3}(\R)\times \R$ and the basis $X_{1}, X_{2},X_{3},X_{4}$ gives the $\Q$-structure associated with the lattice $H_{3}(\Z)\times \Z$.
Consider the ideal $\mathfrak h=\left\langle X_{2},X_{3} \right\rangle$.
In this case, the assumptions in Theorem \ref{lieiso} hold.

Take the dual basis $\{x_{1},x_{2},x_{3},x_{4}\}$ of $\{X_{1}, X_{2},X_{3},X_{4}\}$ and consider 
$\wedge^{\bullet}\g_{\C}^{\ast}=\wedge^{\bullet}\langle x_{1},x_{2},x_{3},x_{4}\rangle$. 
Consider the non-degenerate integrable pure form 
\[
\rho=e^{ix_{2}\wedge x_{3}}\wedge (x_{1}+ix_{4})
\]
of type $1$.
We have 
\[{\mathfrak L}=\langle X_{1}+iX_{4}, x_{1}+ix_{4}, X_{2}-ix_{3}, X_{3}+ix_{2}\rangle.
\]
In this case, 
 we have 
${\mathfrak S}=\langle X_{2}-ix_{3}, X_{3}+ix_{2}\rangle$ and ${\mathfrak S}$ is  an ideal.
We obtain
\begin{eqnarray*}
{\mathfrak U}^{2}&=&\langle \rho\rangle\\[5pt]
{\mathfrak U}^{1}&=&\langle e^{ix_{2}\wedge x_{3}},\; e^{ix_{2}\wedge x_{3}}\wedge (x_{1}+ix_{4})\wedge (x_{1}-ix_{4}),\; (x_{1}+ix_{4})\wedge x_{3}, \\[5pt]
&& (x_{1}+ix_{4})\wedge x_{2}\rangle\\[5pt]
{\mathfrak U}^{0}&=&\langle e^{ix_{2}\wedge x_{3}}\wedge (x_{1}-ix_{4}),\; x_{3},\; x_{2},\; x_{3}\wedge (x_{1}+ix_{4})\wedge (x_{1}-ix_{4}), \\[5pt]
&&x_{2}\wedge (x_{1}+ix_{4})\wedge (x_{1}-ix_{4}),\; e^{-ix_{2}\wedge x_{3}}\wedge (x_{1}+ix_{4})
\rangle\\[5pt]
{\mathfrak U}^{-1}&=&\langle e^{-ix_{2}\wedge x_{3}},\; e^{-ix_{2}\wedge x_{3}}\wedge (x_{1}+ix_{4})\wedge (x_{1}-ix_{4}),\;  (x_{1}-ix_{4})\wedge x_{3},\\[5pt]
&& (x_{1}-ix_{4})\wedge x_{2}\rangle\\[5pt]
{\mathfrak U}^{-2}&=&\langle \bar\rho\rangle.
\end{eqnarray*}

We have that the only non-trivial differentials are
\begin{eqnarray*}
d \left((x_{1}+ix_{4})\wedge x_{3}\right) = \delbar \left( (x_{1}+ix_{4})\wedge x_{3} \right) = i x_1 \wedge x_2 \wedge x_4 \;, \\[5pt]
d \left(x_{3}\right) = -\frac{1}{2} (x_1+i x_4)\wedge x_2 - \frac{1}{2} (x_1-i x_4)\wedge x_2 \;, \\[5pt]
d \left((x_{1}-ix_{4})\wedge x_{3}\right) = \del \left( (x_{1}+ix_{4})\wedge x_{3} \right) = i x_1 \wedge x_2 \wedge x_4 \;.
\end{eqnarray*}

Define the Kodaira-Thurston manifold as the compact quotient
$$ M \;:=\; \left. \left( H_{3}(\Z)\times \Z \right) \middle\backslash \left( H_{3}(\R)\times \R \right) \right. \;.$$
By Corollary \ref{cor:lieiso}, we get:
\begin{eqnarray*}
 GH_{\delbar}^{2}(M) &=& \langle [\rho] \rangle\\[5pt]
 GH_{\delbar}^{1}(M) &=& \langle [e^{ix_{2}\wedge x_{3}}],\; [e^{ix_{2}\wedge x_{3}}\wedge (x_{1}+ix_{4})\wedge (x_{1}-ix_{4})] \rangle\\[5pt]
 GH_{\delbar}^{0}(M) &=& \langle [e^{ix_{2}\wedge x_{3}}\wedge (x_{1}-ix_{4})],\; [x_{2}],\; [x_{3}\wedge (x_{1}+ix_{4})\wedge (x_{1}-ix_{4})], \\[5pt]
  && [e^{-ix_{2}\wedge x_{3}}\wedge (x_{1}+ix_{4})] \rangle\\[5pt]
 GH_{\delbar}^{-1}(M) &=& \langle [e^{-ix_{2}\wedge x_{3}}],\; [e^{-ix_{2}\wedge x_{3}}\wedge (x_{1}+ix_{4})\wedge (x_{1}-ix_{4})] \rangle\\[5pt]
 GH_{\delbar}^{-2}(M) &=& \langle [\bar\rho] \rangle \;,
\end{eqnarray*}
and
\begin{eqnarray*}
 GH_{BC}^{2}(M) &=& \langle [\rho] \rangle\\[5pt]
 GH_{BC}^{1}(M) &=& \langle [e^{ix_{2}\wedge x_{3}}],\; [e^{ix_{2}\wedge x_{3}}\wedge (x_{1}+ix_{4})\wedge (x_{1}-ix_{4})],\; \\[5pt]
  && [(x_{1}+ix_{4})\wedge x_{2}] \rangle\\[5pt]
 GH_{BC}^{0}(M) &=& \langle [e^{ix_{2}\wedge x_{3}}\wedge (x_{1}-ix_{4})],\; [x_{2}],\; [x_{3}\wedge (x_{1}+ix_{4})\wedge (x_{1}-ix_{4})], \\[5pt]
  && [x_{2}\wedge (x_{1}+ix_{4})\wedge (x_{1}-ix_{4})],\;  [e^{-ix_{2}\wedge x_{3}}\wedge (x_{1}+ix_{4})] \rangle\\[5pt]
 GH_{BC}^{-1}(M) &=& \langle [e^{-ix_{2}\wedge x_{3}}],\; [e^{-ix_{2}\wedge x_{3}}\wedge (x_{1}+ix_{4})\wedge (x_{1}-ix_{4})],\; \\[5pt]
  && [(x_{1}-ix_{4})\wedge x_{2}] \rangle\\[5pt]
 GH_{BC}^{-2}(M) &=&\langle [\bar\rho] \rangle \;.
\end{eqnarray*}


\begin{thebibliography}{10}
\bibitem{angella-1}
D. Angella, The cohomologies of the Iwasawa manifold and of its small deformations, {\em J. Geom. Anal.} \textbf{23} (2013), no.~3, 1355--1378.

\bibitem{angella-kasuya-2}
D. Angella, H. Kasuya, Cohomologies of deformations of solvmanifolds and closedness of some properties, \texttt{arXiv:1305.6709v2 [math.CV]}, to appear in {\em Mathematica Universalis}.

\bibitem{angella-kasuya-SG}
D. Angella, H. Kasuya, Symplectic Bott-Chern cohomology of solvmanifolds, \texttt{arXiv:1308.4258 [math.SG]}.

\bibitem{cavalcanti-phd}
G.~R. Cavalcanti, New aspects of the $dd^c$-lemma, Oxford University D. Phil thesis, \texttt{arXiv:math/0501406v1 [math.DG]}.

\bibitem{cavalcanti-jgp}
G.~R. Cavalcanti, The decomposition of forms and cohomology of generalized complex manifolds, {\em J. Geom. Phys.} \textbf{57} (2006), no.~1, 121--132.

\bibitem{cavalcanti-gualtieri}
G.~R. Cavalcanti, M. Gualtieri, Generalized complex structures on nilmanifolds, {\em J. Symplectic Geom.} \textbf{2} (2004), no.~3, 393--410.

\bibitem{console-fino}
S. Console, A. Fino, Dolbeault cohomology of compact nilmanifolds, {\em Transform. Groups} \textbf{6} (2001), no.~2, 111--124.

\bibitem{console-survey}
S. Console, Dolbeault cohomology and deformations of nilmanifolds, {\em Rev. Un. Mat. Argentina} \textbf{47} (2006), no.~1, 51--60.

\bibitem{gualtieri-annals}
M. Gualtieri, Generalized complex geometry, {\em Ann. of Math. (2)} \textbf{174} (2011), no.~1, 75--123.

\bibitem{hattori}
A. Hattori, Spectral sequence in the de {R}ham cohomology of fibre bundles, {\em J. Fac. Sci. Univ. Tokyo Sect. I} \textbf{8} (1960), no.~1960, 289--331.

\bibitem{hitchin}
N.~J. Hitchin, Generalized Calabi-Yau manifolds, {\em Q. J. Math.} \textbf{54} (2003), no.~3, 281--308.

\bibitem{kasuya-dg}
H. Kasuya, Differential Gerstenhaber algebras and generalized deformations of solvmanifolds. \texttt{arXiv:math/1211.4188v3 [math.DG]}.

\bibitem{nomizu}
K. Nomizu, On the cohomology of compact homogeneous spaces of nilpotent Lie groups, {\em Ann. of Math. (2)} \textbf{59} (1954), no.~3, 531--538.

\bibitem{Raghunathan}
M.~S. Raghunathan, {\it Discrete subgroups of Lie Groups},  Ergebnisse der Mathematik und ihrer Grenzgebiete, Band \textbf{68}, Springer-Verlag, New York, 1972.

 \bibitem{Rollenske}
S. Rollenske, Lie-algebra Dolbeault-cohomology and small deformations of nilmanifolds. {\em J. Lond. Math. Soc.} (2) {\bf 79} (2009), no. ~2, 346--362.

\bibitem{tsai-tseng-yau}
C.-J. Tsai, L.-S. Tseng, S.-T. Yau, Cohomology and Hodge Theory on Symplectic Manifolds: III, \texttt{arXiv:1402.0427v2 [math.SG]}.

\bibitem{tseng-yau-1}
L.-S. Tseng, S.-T. Yau, Cohomology and Hodge Theory on Symplectic Manifolds: I, {\em J. Differ. Geom.} \textbf{91} (2012), no.~3, 383--416.

\bibitem{tseng-yau-2}
L.-S. Tseng, S.-T. Yau, Cohomology and Hodge Theory on Symplectic Manifolds: II, {\em J. Differ. Geom.} \textbf{91} (2012), no.~3, 417--443.

\bibitem{tseng-yau-3} L.-S. Tseng, S.-T. Yau, Generalized cohomologies and supersymmetry, {\em Comm. Math. Phys.} \textbf{326} (2014), no.~3, 875--885.
 
\end{thebibliography}
\end{document}